\newtheorem{theorem}{Theorem}[section]
\newtheorem{lemma}[theorem]{Lemma}
\newtheorem{proposition}[theorem]{Proposition}
\theoremstyle{definition}
\newtheorem{definition}[theorem]{Definition}
\newtheorem{example}[theorem]{Example}
\theoremstyle{remark}
\newtheorem{remark}[theorem]{Remark}
\newtheorem{note}[theorem]{Note}
\newtheorem{result}{Result}
\newcommand{\be}{\begin{equation}}
\newcommand{\ee}{\end{equation}}
\begin{document}

\title[Set-valued fractal approximation ]{Set-valued fractal approximation for countable data sets}



\author{Parneet Kaur}
\address{Department of Mathematics, 
Punjab Engineering college
(Deemed to be University), 
Sector 12 Chandigarh 160012, India}
\email{parneetkaur.phd24maths@pec.edu.in}

\author{Rattan Lal}
\address{Department of Mathematics, 
Punjab Engineering college
(Deemed to be University), 
Sector 12 Chandigarh 160012, India}
\email{rattanlal@pec.edu.in}

\author{Ankit Kumar}
\address{Department of Mathematics, 
Punjab Engineering college
(Deemed to be University), 
Sector 12 Chandigarh 160012, India}
\email{ankitkumar@pec.edu.in}
\author{Saurabh Verma}
\address{Department of Applied Sciences, IIIT Allahabad, Prayagraj, India 211015}
\email{saurabhverma@iiita.ac.in}




\subjclass[2020]{Primary 28A80; Secondary 41A30}


 
\keywords{Set-valued functions, fractal functions, set-valued approximation, invariant measures, fractal dimension}

\begin{abstract} 
Fractal geometry deals mainly with irregularity and captures the complexity of a structure or phenomenon. In this article, we focus on the approximation of set-valued functions using modern machinery on the subject of fractal geometry. We first provide a construction of fractal functions for countable data sets and use these functions in the approximation and study of set-valued mappings. We also show the existence and uniqueness of an invariant Borel measure supported on the graph of a set-valued fractal function. In addition, we obtain some effective bounds on the dimensions of the constructed set-valued fractal functions.

\end{abstract}

\maketitle



\section{Introduction}\label{section 1}
Approximation theory in fractals is one of the most celebrated topics in the literature. Firstly, Barnsley \cite{MF1} introduced the notion of fractal interpolation functions (FIFs) for a finite data set based on the theory of iterated function systems and the Banach contraction theorem. In continuation of this foundational result of Barnsley, Navascu\'es \cite{Navascues2010,Navascues2005} explored a parameterized class of FIFs, related with the continuous function defined on the compact interval of $\mathbb{R}$, known as $\alpha$-fractal function. The $\alpha$- fractal function is then explored more in \cite{JV1,sverma4, GCV1}, whereas the generalized $C^r$ fractal function is investigated in \cite{B,Massopust2016fractal,Navascues2004}.

The authors in \cite{Ver21} have generalized the concept of a $\alpha$-fractal function for the set-valued maps (SVMs). The approximation of the set-valued surfaces is studied in \cite{AV3}. SVMs are widely applied in various fields including control theory, operational research, mathematical modeling, game theory, etc. For a better insight into the fundamental aspect of SVMs in more detail, refer to \cite{levin}. The algebra on sets is not the same as the algebra on numbers. The binary metric linear combination of sets is used in \cite{Artstein}, is further extended in \cite{Berdysheva} called the metric sum of the linear combination of sets, and the Minkowski sum of two sets is used in \cite{Ndyn} and the sum defined in \cite{Artstein} has been extended in \cite{Berdysheva} known as a metric sum of the linear combination of sets. Throughout the paper, the sum operation of sets is taken as the Minkowski sum of sets.

In 1986, Barnsley \cite{MF1} introduced FIFs on the finite data set. 
Firstly, the concept of FIFs on finite data sets was generalized to countable FIFs \cite{Secelean} on countable data sets by Secelean \cite{Secelean2}. In \cite{CAV1}, Chandra et al. extended the result of Secelean \cite{Secelean2} to different types of FIFs on countable data sets and Miculescu et al. \cite{Miculescu} to a general data set using the Banach contraction theory. Recently, Verma and  Priyadarshi,\cite{Mverma1 , Mverma3} have shown the existence of FIFs on the general data set and bivariate FIFs for countable data set using the Rakotch contraction theory. In this paper, we have studied the construction of FIFs on countable data set. 

The countable iterative function system (CIFS) is more general than the finite iterative function system (IFS). Firstly, CIFS was studied by Mauldin and Urbanski \cite{Mauldin96} for conformal contractions. Hausdorff dimension of the attractor of CIFS that contains similarity and conformal contractions is studied in \cite{Mauldin96, Hille, Jiang}. One can refer, \cite{Mauldin96, Secelean } to see the example in which fractals constructed by the CIFS cannot be constructed by IFS.
In this paper, the CIFS we have taken consists of bi-Lipschitz contraction maps, which are more generalized than the self-similar and conformal maps.  

Several theories for the approximation of SVMs are given in the literature. For example, in \cite{levin}, where the concept of univariate data interpolation functions in general metric space is given and in \cite{Vitale}, convex SVMs are approximated using the set-valued Bernstein polynomial. For an in-depth exploration of the approximation of convex SVMs, refer to \cite{Rbaier, Mcampiti, Ndyn1} and for the approximation of compact SVMs, refer to \cite{Berdysheva, Ndyn}. In this paper, we study the approximation of fractals on SVMs.

The concept of fractal dimension always captivates the attention of scientists. The dimension of the fractals can be studied by various methods such as Hausdorff dimension, box counting dimension, packing dimension, entropy dimension, quantization dimension, and many more. For a more detailed study of the fractal dimension, see \cite{AchourBilel, Mauldin88, Chandra22, Liang10, PostOper12, AV2, DV1, GCV2, BilelLiang, Yu}. In this paper, we obtain the bounds on the Hausdorff dimension of the graph of the $\alpha$-fractal function using the covering method.

This paper presents a new idea for constructing the fractal function of SVMs on countable data sets. We further showed the existence of an invariant Borel probability measure supported by the graph of an $\alpha$-fractal function of SVMs. We also obtained the bounds of the Hausdorff dimension on the fractal function of the constructed set-valued fractal functions.


\section{Preliminaries}
\begin{definition} \cite{Fal}
    Let $(X,d)$ be a metric space and $E$ be a subset of $X$. The Hausdorff dimensions of $E$ is defined as 
    \[  \text{dim}_H E= \text{inf} \big\{s>0: H^s(E)=0\big\} = \text{sup}\big\{ s >0: H^s(E) =\infty\big\}\]
    and the $H^s(E)$ is s-dimensional Hausdorff measure of $E,$ is defined as:
    \[ H^s(E)=\lim_{\delta \rightarrow 0}H_\delta^s(E) \text{ and }  H_\delta^s(E) = \text{inf} \bigg\{ \sum_{i=1}^\infty\lvert U_i \rvert^s : 0< \lvert U_i\rvert \le \delta \text{ and } E\subset \bigcup_{i=1}^{\infty} U_i \bigg\}  \]
    where $\lvert U_i \rvert$ represents the diameter of $U_i$. The diameter of $U$ is defined as: \[\lvert U\rvert =\sup\left\{d(t,w): t,w \in U\right\}.\]
\end{definition}

 \begin{definition}\cite[Definition 1.3.1]{Aubin}
 Let $F:X\rightrightarrows Y$ be an SVM from the metric space $X$ to the metric space $Y$. Consider
 \begin{equation}\label{Gf1}
   G_F=\{(t,w) \in X \times Y: w \in F(t)\}.
 \end{equation}
 We call $G_F$  the graph of SVM $F$. The value or image of $F$ at $t$ is represented by $F(t)$. The map $F$ is considered nontrivial if there exists at least one $t \in X$ for which $F(t)$ contains at least one element. Furthermore, $F$ is called strict if $F(t)$ contains at least one element for each $t \in X.$ The domain of $F$ is defined as 
  \[\text{Dom}(F):=\{t \in X: F(t) \ne \emptyset\}\] 
 and the range of $F$ is defined as \[\text{Im}(F):= \underset{t \in X}{\bigcup}F(t).\]
 \end{definition}
 
Throughout the paper, we denote $I$ as a closed and bounded interval of $\mathbb{R}.$
\begin{remark}\cite{Aubin}
 If the graph of a SVM is closed (or convex), then $F$ is defined to be closed (or convex).
 \end{remark}

 \begin{definition}\label{order}
 Consider $F_1,F_2$ be SVMs such that $F_1, F_2: I\rightrightarrows \mathbb{R}$. Then, $F_1\leq F_2$ if and only if $F_1(t)\subseteq F_2(t)$ for all $t\in I$.
 \end{definition}
 \begin{lemma}\emph{\cite{Aubin}}
Assume $F$ to be an SVM from the normed space $X$ to the normed space $Y$, which is said to be convex if and only if $\text{for all } t, w \in \text{Dom}(F) \text{and}~\theta \in [0,1]$, we have
 \[\theta F(t)+(1-\theta)F(w) \subset F\left(\theta t+(1-\theta)w\right).\]
\end{lemma}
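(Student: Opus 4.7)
The plan is to prove both directions of the equivalence by unpacking the definition of graph convexity and translating it into the Minkowski-sum inclusion, and vice versa. Since the paper defines $F$ to be convex precisely when its graph $G_F \subset X \times Y$ is a convex set (via the remark recalled from \cite{Aubin}), the argument is essentially a routine manipulation of pairs $(t,u) \in G_F$.

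For the forward implication, I would assume $G_F$ is convex, fix $t, w \in \text{Dom}(F)$ and $\theta \in [0,1]$, and pick an arbitrary point $z \in \theta F(t) + (1-\theta) F(w)$. By definition of the Minkowski sum, $z = \theta u + (1-\theta) v$ with $u \in F(t)$ and $v \in F(w)$, so by \eqref{Gf1} the pairs $(t,u)$ and $(w,v)$ both lie in $G_F$. Convexity of $G_F$ then gives
\[
\theta(t,u) + (1-\theta)(w,v) = \bigl(\theta t + (1-\theta) w,\; \theta u + (1-\theta) v\bigr) \in G_F,
\]
which, reading off the second coordinate, is exactly $z \in F\bigl(\theta t + (1-\theta) w\bigr)$. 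This establishes the inclusion $\theta F(t) + (1-\theta)F(w) \subset F(\theta t + (1-\theta)w)$.

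For the converse, I would assume the inclusion holds for every admissible $t, w, \theta$ and show that $G_F$ is convex. Given two points $(t,u), (w,v) \in G_F$ and $\theta \in [0,1]$, the memberships $u \in F(t)$ and $v \in F(w)$ imply $\theta u + (1-\theta) v \in \theta F(t) + (1-\theta) F(w)$, and the hypothesis then places this combination inside $F(\theta t + (1-\theta) w)$. Hence $\bigl(\theta t + (1-\theta) w,\, \theta u + (1-\theta) v\bigr) \in G_F$, which is the definition of convexity of $G_F$.

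There is no real obstacle here: the only thing one must be careful about is keeping the set-algebra honest, namely remembering that in the Minkowski sum $\theta F(t) + (1-\theta) F(w)$ the scalars $\theta$ and $1-\theta$ multiply every element of the respective sets, and every element of the sum is realized as such a combination (this is the point where normed-space structure on $Y$ is used so that scalar multiples and sums of sets make sense). Once that bookkeeping is in place, both directions reduce to reading the graph condition coordinate-wise.
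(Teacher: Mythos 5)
Your proof is correct: both directions follow by unpacking the paper's definition that $F$ is convex precisely when its graph $G_F$ is a convex subset of $X\times Y$, and your coordinate-wise bookkeeping (with points of $G_F$ automatically having first coordinate in $\mathrm{Dom}(F)$, so the restriction to $t,w\in\mathrm{Dom}(F)$ causes no trouble) is sound. The paper itself gives no proof of this lemma—it is quoted from the reference \cite{Aubin}—and your two-way graph argument is exactly the standard proof of that characterization, so there is nothing to reconcile.
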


 \begin{definition}\cite{Aubin}
 Assume $F $ be a SVM from $X$ to $Y$. Let $t \in \text{Dom}(F)$ provided that for each neighborhood $N$ of $F(t),$ the following holds
 \begin{equation}\label{uppersemi}
 \text{there exists } \delta > 0  \text{ such that } F(t') \subset N \text{ for all } t' \in B_X(t, \delta).
 \end{equation}
 Then, $F$ is said to be upper semicontinuous at $t$. If this condition holds for every $t\in \text{Dom}(F)$, in that case $F$ is called as an upper semicontinuous function.
 
 For the lower semicontinuity of $F$ at $t$. Let N be an open subset of Y such that $N\subset Y$ with $N\cap F(t) \neq \emptyset $ and the following holds 
 \[ \text{ there exist } \delta >0 \text{ such that } F(t') \cap N \neq \emptyset \text{ for all } t' \in B_X(t,\delta). \]
 Then, $F$ is said to be lower semicontinuous at $t$. If this condition holds for every $t\in \text{Dom}(F),$ in that case $F$ is called as a lower semicontinuous function. If $F$ is both upper semicontinuous and lower semicontinuous then $F$ is continuous.

 \end{definition}
 Let $(X,d)$ be a complete metric space, and the notation $\mathrm{K}(X)$ is used to denote the collection of all non-empty compact subsets of $X$. Consider $E$ and $E'$ belongs to $\mathrm{K}(X)$ and $H_d$ be the Hausdroff metric on $\mathrm{K}(X)$. Defined as:
 \[H_d(E,E')=\text{inf}\{\delta>0,E'\subset E_{\delta} ~\text{and}~ E \subset E'_{\delta} \} \]
 where $E_{\delta}$ and $E'_{\delta}$ represents the $\delta$-neigbourhoods of sets $E$ and $E'$, respectively. It is well established result that $(\mathrm{K}(X),H_d)$ forms a complete metric space.
 \begin{remark}
\cite[Proposition $1.17$]{Hu1997handbook}\label{new71}
   Let us recall some properties of the Hausdorff metric.
\begin{itemize}
  \item For any $k \in \mathbb{R}$ and $S_1,S_2 \in  \mathrm{K} (\mathbb{R})$, we have $H_d (k S_1,k S_2)=  \lvert k\rvert   H_d (S_1,S_2).$
  \item Assume that $X$ is a normed space. For any $S_1, S_2, S_3, S_4 \in  \mathrm{K} (\mathbb{R}),$ we have
   \[H_d (S_1 + S_2, S_3 + S_4) \le  H_d (S_1,S_3) + H_d (S_2, S_4),\]
   where sum of sets is taken as Minkowski sum. For sets $C,D \in \mathrm{K} (\mathbb{R}),$ the Minkowski sum  is defined as  \[ C+D := \{c + d : c \in  C  , d \in D\}. \]
\end{itemize}
\end{remark}
 
\begin{definition}\cite{Hut}\label{Iterated}
Let $(X,d)$ be a compact metric space. Let $w_i:(X,d)\to (X,d)$ be contraction mappings for each $i\in \mathbb{N},$ i.e., there exist $c_i <1$ such that \[ d(w_i(t),w_i(w))\le c_id(t,w) ~~\forall~~ t,w \in X  \text{ and } i \in \mathbb{N},\] Then the system $\{(X,d);~w_i,~i\in \mathbb{N}\} $ is known as countable iterated function system (CIFS).
\end{definition}
\begin{definition}
    Consider $\{(X,d);~w_i,~i\in \mathbb{N}\} $ to be a CIFS. If $K \subseteq X$ and $K\neq \emptyset$, also if $K$ satisfies the \[ K= \overline{\underset{i\in\mathbb{N}}{\bigcup}w_i(K)},\] then the set $K$ is known as the attractor of the CIFS.
\end{definition}

\begin{definition}\cite{Hut}\label{OSC}
A CIFS, $\big\{(X,d);~w_i,~i\in\mathbb{N}\big\}$ is said to satisfy the Open Set Condition (OSC), if there exists a non-empty open set $U\subset \mathbb{R}$ such that 
\[w_i(U)\subset U ~\forall ~ i \in \mathbb{N},~~ w_i(U)\cap w_j(U)=\emptyset ~\text{ for }~ i\neq j.\]
In addition, if $K$ is an attractor of CIFS and satisfies the $U\cap K\neq \emptyset$, then the CIFS is said to satisfy the Strong Open Set Condition (SOSC).
 \end{definition}
 \begin{result} \label{res 4.5}
If $A$ is dense subset of $\mathbb{R}$ and $f: \mathbb{R}\to \mathbb{R}$ is a continuous function, then
\begin{enumerate}
    \item if $f(t)\leq 0$ for each $t\in A$, then $f(t)\leq0$ for each $t\in 
    \mathbb{R}.$
    \item if $f(t)\geq0$ for each $t\in A$, then $f(t)\geq0$ for each $t\in 
    \mathbb{R}.$
\end{enumerate}
\end{result}
Throughout the paper, we have used the following notation : $\mathrm{K}(\mathbb{R})$ for the collection of all non-empty compact subsets of $\mathbb{R}$, $\mathrm{K}_c(\mathbb{R})$ for the collection of all non-empty convex subsets of $\mathbb{R},$ and $\mathcal{C}(I, \mathrm{K}(\mathbb{R}))$ for the collection of all continuous mappings from a closed and bounded interval $I$ to $\mathrm{K}(\mathbb{R}).$

\section{Construction of Fractal Functions in $C( I, \mathrm{K}( \mathbb{R}) )$}\label{sec3}
Let $\mathcal{C}( I, \mathrm{K}( \mathbb{R}) )$ denote the space endowed with the metric $\mathfrak{d}_{\mathcal{C}}$, where 
\begin{equation*}
    \mathfrak{d}_{\mathcal{C}}(S_1,S_2)=\lVert S_1-S_2 \rVert _{\infty}=\sup_{\substack{t\in I}}H_d( S_1(t),S_2(t)).
          \end{equation*}
With this metric $\mathfrak{d}_{\mathcal{C}}$, the space $  \mathcal{C}( I, \mathrm{K}( \mathbb{R}) ) $ is  complete.

\begin{theorem}\label{Thm3.2}
Let   $\vartheta:=\{(t_1,t_2,t_3,\ldots,t_\infty=\lim t_n): t_1 < t_2 <t_3< \cdots <t_\infty=\lim t_n \},$ which  defines a partition of $I,$ and let $I_n=[t_n,t_{n+1}]$ and assume the following contraction homeomorphism $\zeta_n:I \rightarrow I_n $ such that $\zeta_{n}(t_1)=t_{n} \text{ and } \zeta_{n}(t_\infty)=t_{n+1}$ or $\zeta_{n}(t_1)=t_{n+1} \text{ and } \zeta_{n}(t_\infty)=t_{n}$. Let $\Phi\in \mathcal{C}(I,\mathrm{K}(\mathbb{R}))$  and the base function $B \in \mathcal{C}( I, \mathrm{K}( \mathbb{R}) )$ satisfies the following condition
   \[B(t_1)-\Phi(t_1)=B(t_\infty)-\Phi(t_\infty),\]
  where $\mathcal{A}_1-\mathcal{A}_2=\left\{a_1-a_2: a_1\in \mathcal{A}_1, a_2 \in \mathcal{A}_2 \text{ and } \mathcal{A}_1,\mathcal{A}_2\in \mathrm{K}(\mathbb{R}) \right\}$. 
  If $\alpha \in \mathbb{R}$ is a scaling factor, satisfying the $\lvert \alpha \rvert < 1,$ then  a unique function $\Phi^{\alpha}_{\vartheta,B} \in \mathcal{C}( I, \mathrm{K}( \mathbb{R}) )$ exists,  satisfying the following self-referential equation 
   \begin{equation}\label{self12}
      \Phi^{\alpha}_{\vartheta,B}(t)= \Phi(t) + \alpha [\Phi^{\alpha}_{\vartheta,B}(\zeta_n^{-1}(t)) - B(\zeta_n^{-1}(t))] ~\text{ for every }~ t\in I_n,
   \end{equation}
    where $n \in J=\{1,2,3\ldots\}. $ and for $t_\infty$ satisfies the
    \begin{equation}\label{eqn4}
        \Phi^{\alpha}_{\vartheta,B}(t_\infty)=\underset{n \rightarrow \infty}{\lim}\Big(\Phi(t_n) + \alpha [\Phi^{\alpha}_{\vartheta,B}(\zeta_n^{-1}(t_n)) - B(\zeta_n^{-1}(t_n))]\Big).
    \end{equation}
    \
   \end{theorem}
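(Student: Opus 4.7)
The plan is to apply Banach's fixed point theorem to a Read--Bajraktarevi\'c-style operator on a suitable closed subspace of $\mathcal{C}(I,\mathrm{K}(\mathbb{R}))$. Define
\[
\mathcal{C}^{*}:=\bigl\{\Psi\in\mathcal{C}(I,\mathrm{K}(\mathbb{R})):\Psi(t_1)-B(t_1)=\Psi(t_\infty)-B(t_\infty)\bigr\},
\]
subtractions being Minkowski. Negating both Minkowski differences shows the hypothesis $B(t_1)-\Phi(t_1)=B(t_\infty)-\Phi(t_\infty)$ is equivalent to $\Phi\in\mathcal{C}^{*}$, so $\mathcal{C}^{*}$ is non-empty; it is also closed in $(\mathcal{C}(I,\mathrm{K}(\mathbb{R})),\mathfrak{d}_{\mathcal{C}})$ by continuity of evaluation at $t_1$ and $t_\infty$, hence complete.

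Next, I would introduce the operator $T:\mathcal{C}^{*}\to\mathcal{C}(I,\mathrm{K}(\mathbb{R}))$ via \eqref{self12} on each $I_n$ and \eqref{eqn4} at $t_\infty$, and verify well-definedness piecewise. On each open subinterval $(t_n,t_{n+1})$, continuity of $T\Psi$ is immediate from continuity of $\Phi$, $B$, $\Psi$, and $\zeta_n^{-1}$. At an interior knot $t_n$ with $n\ge 2$, evaluating $T\Psi(t_n)$ from the two adjacent intervals gives expressions of the form $\Phi(t_n)+\alpha[\Psi(t_1)-B(t_1)]$ and $\Phi(t_n)+\alpha[\Psi(t_\infty)-B(t_\infty)]$ (with the orientation of each $\zeta_n$ controlling which endpoint appears where), and these coincide precisely because $\Psi\in\mathcal{C}^{*}$. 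For the limit defining $(T\Psi)(t_\infty)$ in \eqref{eqn4}, since $\zeta_n^{-1}(t_n)\in\{t_1,t_\infty\}$, the $\alpha$-bracket is in fact a single constant set across $n$ by the $\mathcal{C}^{*}$-condition, so the right-hand side reduces to $\Phi(t_\infty)+\alpha[\Psi(t_1)-B(t_1)]$ by continuity of $\Phi$. Checking that $T\Psi\in\mathcal{C}^{*}$ collapses, after substituting the two formulas into the defining equality, to the imposed hypothesis $\Phi(t_1)-B(t_1)=\Phi(t_\infty)-B(t_\infty)$.

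The contraction estimate is then routine. For $\Psi_1,\Psi_2\in\mathcal{C}^{*}$ and $t\in I_n$, the translation-invariance and positive homogeneity of $H_d$ from Remark~\ref{new71} yield
\[
H_d\bigl((T\Psi_1)(t),(T\Psi_2)(t)\bigr)\le|\alpha|\,H_d\bigl(\Psi_1(\zeta_n^{-1}(t)),\Psi_2(\zeta_n^{-1}(t))\bigr)\le|\alpha|\,\mathfrak{d}_{\mathcal{C}}(\Psi_1,\Psi_2),
\]
with the terms $\Phi(t)$ and $-B(\zeta_n^{-1}(t))$ cancelling symmetrically; the same bound at $t_\infty$ follows by passing to the limit in \eqref{eqn4}. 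Supping over $t\in I$ gives $\mathfrak{d}_{\mathcal{C}}(T\Psi_1,T\Psi_2)\le|\alpha|\,\mathfrak{d}_{\mathcal{C}}(\Psi_1,\Psi_2)$, so $T$ is a contraction because $|\alpha|<1$, and Banach's theorem produces a unique $\Phi^{\alpha}_{\vartheta,B}\in\mathcal{C}^{*}$ satisfying \eqref{self12}--\eqref{eqn4}. Uniqueness in the full space $\mathcal{C}(I,\mathrm{K}(\mathbb{R}))$ follows because any continuous solution is forced into $\mathcal{C}^{*}$ by matching left- and right-sided limits at every knot $t_n$.

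The main technical hurdle I expect is the continuity of $T\Psi$ at $t_\infty$ along arbitrary sequences, not merely along $\{t_n\}_{n\in J}$ as governed by \eqref{eqn4}: as $t\to t_\infty$ through the interiors of successively smaller intervals $I_n$, the preimage $\zeta_n^{-1}(t)$ need not approach an endpoint of $I$, so the term $\alpha[\Psi(\zeta_n^{-1}(t))-B(\zeta_n^{-1}(t))]$ does not stabilize a priori. Handling this requires exploiting the contraction factors of the $\zeta_n$ (in particular $|I_n|\to 0$), the uniform continuity of $\Psi-B$ on the compact interval $I$, and iterative refinement using the self-similar structure to squeeze $(T\Psi)(t)$ towards the limit prescribed by \eqref{eqn4}.
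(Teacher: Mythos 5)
Your construction coincides with the paper's: the same closed subspace (your $\mathcal{C}^{*}$ is the paper's $\mathcal{C}_\Phi(I,\mathrm{K}(\mathbb{R}))$), the same Read--Bajraktarevi\'c operator, the same Hausdorff-metric contraction estimate, and Banach's fixed point theorem. The contraction step and the endpoint matching at interior knots are fine. The genuine gap is exactly the step you defer at the end: that $T\Psi$ is continuous at $t_\infty$, i.e.\ that $T$ maps $\mathcal{C}^{*}$ into itself. The strategy you sketch cannot close it. The problem is not that $\zeta_n^{-1}(t)$ ``need not approach an endpoint a priori''; it is that as $t$ runs over the single small interval $I_n$, the preimage $\zeta_n^{-1}(t)$ sweeps \emph{all} of $I$. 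Consequently, for any $\Psi\in\mathcal{C}^{*}$ the oscillation of $T\Psi$ over $I_n$ is at least $\lvert\alpha\rvert$ times the oscillation of $\Psi-B$ over the whole of $I$, minus the oscillation of $\Phi$ on $I_n$; this does not tend to $0$ as $n\to\infty$, however small $\lvert I_n\rvert$ becomes, so uniform continuity of $\Psi-B$ and the contractivity of the $\zeta_n$ give you nothing. A concrete single-valued instance (a legitimate special case of the set-valued setting): $I=[0,1]$, $t_n=1-2^{1-n}$, $\Phi(t)=\{\sin\pi t\}$, $B\equiv\{0\}$, $\alpha=\tfrac12$, $\Psi=\Phi\in\mathcal{C}^{*}$; then $(T\Psi)(t_n)\to 0$ while $(T\Psi)(\zeta_n(\tfrac12))=\sin(\pi\zeta_n(\tfrac12))+\tfrac12\to\tfrac12$, so $T\Psi$ is discontinuous at $t_\infty=1$ and $T(\mathcal{C}^{*})\not\subseteq\mathcal{C}^{*}$. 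With a single constant scaling $\alpha\neq 0$ this invariance fails generically; in the countable-FIF literature this is precisely why one takes interval-dependent scalings $\alpha_n$ with $\alpha_n\to 0$ (or otherwise forces the vertical oscillation of the $n$-th piece to vanish). To be fair, the paper's own proof does not address this either --- it disposes of well-definedness in one asserted sentence --- so you have correctly located the weakest point of the argument; but as written your proposal does not, and under the stated hypotheses cannot, complete that step.

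A secondary, smaller issue: your claim of uniqueness in all of $\mathcal{C}(I,\mathrm{K}(\mathbb{R}))$ by ``forcing'' any continuous solution into $\mathcal{C}^{*}$ requires cancelling the common Minkowski summand $\Phi(t_n)$ from an identity of the form $\Phi(t_n)+\alpha[\Psi(t_1)-B(t_1)]=\Phi(t_n)+\alpha[\Psi(t_\infty)-B(t_\infty)]$, and Minkowski cancellation fails for non-convex compact sets (for example $\{0,1,2\}+\{0,1\}=\{0,2\}+\{0,1\}$ while $\{0,1,2\}\neq\{0,2\}$). The paper only establishes uniqueness of the fixed point within $\mathcal{C}_\Phi(I,\mathrm{K}(\mathbb{R}))$, and that is all the Banach argument delivers.
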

   \begin{proof}
    Define the set $ \mathcal{C}_\Phi(I,\mathrm{K}( \mathbb{R}))= \{ U \in \mathcal{C}(I,\mathrm{K}( \mathbb{R})): U(t_1)-B(t_1)~= U(t_\infty)-B(t_\infty)\}.$ It is straightforward that $\mathcal{C}_\Phi(I,\mathrm{K}( \mathbb{R}))$ forms a closed subset within the space $\mathcal{C}(I,\mathrm{K}( \mathbb{R})).$  Thus, $ \mathcal{C}_\Phi(I,\mathrm{K}( \mathbb{R}))$ is  complete with the  metric $\mathfrak{d}_{\mathcal{C}}.$
   Now, we define the  \textit{Read-Bajraktarevi\'c }(RB) operator $\phi$ on $ \mathcal{C}_\Phi( I, \mathrm{K}( \mathbb{R}) )  ,$ given by 
   \[(\phi U)(t)= \Phi(t) + \alpha  [U(\zeta_n^{-1}(t)) - B(\zeta_n^{-1}(t))]\]
   for each $t \in I_n $ and where $n \in J $. For the $t_\infty$ is given by \[ (\phi U)(t_\infty)=\underset{n \rightarrow \infty}{\lim} \Big(\Phi(t_n) + \alpha  [U(\zeta_n^{-1}(t_n)) - B(\zeta_n^{-1}(t_n))\Big) .\]
   The operator $\phi$ is well defined, which follows from the assumptions on  $\Phi,B$, and $\alpha$. Let $U,V \in \mathcal{C}_\Phi( I, \mathrm{K}( \mathbb{R}) ) ,$  now using the properties of the Hausdorff metric given in the  Remark \ref{new71} for any $t\in I_n$, we get 
   \begin{align*}
     H_d((\phi U)(t)&,(\phi V)(t))\\
&= H_d\Big(\Phi(t) + \alpha  [U(\zeta_n^{-1}(t)) - B(\zeta_n^{-1}(t))],\Phi(t) + \alpha  [V(\zeta_n^{-1}(t))-B(\zeta_n^{-1}(t))]\Big)\\
     &\leq H_d\Big( \alpha U(\zeta_n^{-1}(t)),\alpha V(\zeta_n^{-1}(t))\Big)\\
     &=\lvert \alpha \rvert H_d\Big(  U(\zeta_n^{-1}(t)), V(\zeta_n^{-1}(t))\Big).\\
   \end{align*}
   For $t=t_\infty,$ we have
   \begin{align*}
        &H_d((\phi U)(t_\infty),(\phi V)(t_\infty))\\ &=\underset{n \rightarrow\infty}{\lim}H_d\Big(\Phi(t_n) + \alpha  [U(\zeta_n^{-1}(t_n)) - B(\zeta_n^{-1}(t_n))],\Phi(t_n) + \alpha  [V(\zeta_n^{-1}(t_n))-B(\zeta_n^{-1}(t_n))]\Big)\\ &\le \underset{n \rightarrow\infty}{\lim}H_d\Big( \alpha U(\zeta_n^{-1}(t_n)),\alpha V(\zeta_n^{-1}(t_n))\Big)\\  & = \underset{n \rightarrow\infty}{\lim}  \lvert \alpha \rvert H_d\Big(  U(\zeta_n^{-1}(t_n)), V(\zeta_n^{-1}(t_n))\Big).
   \end{align*}
   From the above two inequalities we can conclude that,
   \begin{align*}
       H_d((\phi U)(t),(\phi V)(t)) &\le \lvert \alpha \rvert\sup_{t \in I} H_d( U(t),V(t))\\ 
     &=\lvert \alpha \rvert\lVert U-V\rVert _{\infty}.
   \end{align*}
Because, $ \lvert \alpha \rvert\lVert U-V\rVert _{\infty} $ is independent of $t,$ we get
   \begin{equation*}
    \lVert \phi U-\phi V\rVert _{\infty}
     \le   \lvert \alpha \rvert\lVert U-V\rVert _{\infty}.  
   \end{equation*}
  Since $\lvert \alpha \rvert < 1,$ the mapping $\phi$ acts as a contraction on $\mathcal{C}_\Phi( I, \mathrm{K}( \mathbb{R}) )$. Thus, according to the Banach contraction principle, we conclude that $\phi$ possesses a unique fixed point, say $\Phi^{\alpha}_{\vartheta,B}$ in $\mathcal{C}_\Phi(I,\mathrm{K}(\mathbb{R}))$. Then $\Phi^{\alpha}_{\vartheta,B}$  satisfies the following self-referential equation,
  \[\Phi^{\alpha}_{\vartheta,B}(t)= \Phi(t) + \alpha  [\Phi^{\alpha}_{\vartheta,B}(\zeta_n^{-1}(t)) - B(\zeta_n^{-1}(t))]\]
   for every  $n \in J=\{1,2,3\ldots\}$ and $t \in I_n$ and for the  $t_\infty$ is given by
   \[  \Phi^{\alpha}_{\vartheta,B}(t_\infty)=\underset{n \rightarrow \infty}{\lim}\Big( \Phi(t_n) + \alpha [\Phi^{\alpha}_{\vartheta,B}(\zeta_n^{-1}(t_n)) - B(\zeta_n^{-1}(t_n))]\Big).\]
   
   \end{proof}
  
\begin{note}\label{unchange}
We will use the following notations as:
\begin{itemize}
\item Set difference as following:
    \[\mathcal{A}_1-\mathcal{A }_2=\left\{a_1-a_2: a_1\in \mathcal{A}_1, a_2 \in \mathcal{A}_2 \text{ and } \mathcal{A}_1,\mathcal{A}_2\in \mathrm{K}(\mathbb{R})\right\}.\]
    \item Unless otherwise stated, we assume that $J$, $B$, and $\vartheta$ are the same as each of them stated in  Theorem \ref{Thm3.2}.
    \item In the absence of ambiguity, we take $\Phi^{\alpha} $ instead of  $\Phi^{\alpha}_{\vartheta,B}$.
\end{itemize}

\end{note}
  
\begin{remark}\label{rmrk3.3}
With respect of \eqref{self12}, we obtain
\[\Phi^{\alpha}(t_i)= \Phi(t_i) + \alpha  \Phi^{\alpha}(t_1) - \alpha B(t_1)=\Phi(t_i) + \alpha  \Phi^{\alpha}(t_\infty) - \alpha B(t_\infty) \text{ for every } t_i\in \vartheta, \] 
where $ i = 1,2,3,\dots~ $. Further, if $\Phi^{\alpha}$ and base function $B$ takes the single-valued at the boundary points in a way that $\Phi^{\alpha}(t_1) -  B(t_1)= \Phi^{\alpha}(t_\infty) -  B(t_\infty)=\{0\}$ then
\begin{equation*}
    \begin{aligned}
           &\Phi^{\alpha}(t_i)=\Phi(t_i)\text{ for each } i=1,2,3,\ldots~.
 \end{aligned}
 \end{equation*}
 This establishes that $\Phi^{\alpha}$ is a set-valued countable fractal interpolation function (CFIFs).
\end{remark}

\begin{note}\label{Nt3.4}
The preceding remark suggests the next: When  \[\Phi(t_1)-B(t_1)=\Phi(t_\infty)-B(t_\infty)=\{0\},\] that is, at the end points $\Phi$ and $B$ are single valued. Consider
\[\mathcal{C}_\Phi(I,\mathrm{K}( \mathbb{R}))= \Big\{ U \in \mathcal{C}(I,\mathrm{K}( \mathbb{R})): U(t_1)-B(t_1)= U(t_\infty)-B(t_\infty)=\{0\}\Big\}.\] The set defined above is a complete metric space, and as stated in Theorem \ref{Thm3.2}, the RB operator $\phi:\mathcal{C}_\Phi(I,\mathrm{K}( \mathbb{R})) \to \mathcal{C}_\Phi(I,\mathrm{K}( \mathbb{R}))$ is evidently well defined and satisfies the properties of a contraction mapping. Consequently, we obtain a unique fixed point of $\phi$, take it as $\Phi^\alpha$ satisfying the \[ \Phi^{\alpha}(t_i)=\Phi(t_i) \text{ for all } i=1,2,3,\ldots ~.\] Thus, $\Phi^{\alpha}$ is a set-valued countable fractal interpolation function (CFIFs).
\end{note}
In the next example, we present a  base function $B \in \mathcal{C}( I, \mathrm{K}( \mathbb{R}) )$ that satisfies the end point condition $B(t_1)-\Phi(t_1)=B(t_\infty)-\Phi(t_\infty).$  

\begin{example}
   
    Let $h:I \to \mathbb{R}$ be a continuous function such that $h(t_1)=1,~h(t_\infty)=1.$ Base function is
        \[ B(t)=h(t)\Phi(t)+(t-t_1)(\Phi(t_\infty)-\Phi(t_1))+(t_\infty -t)(\Phi(t_1)-\Phi(t)). \]
    
   \end{example}

Observe that the function $\Phi^{\alpha}$ is a parametric function determined by the following parameters: the base function $B$, the function $\Phi$ itself, the partition $\vartheta$, and the scaling function $\alpha$. To study the collective behavior of function $\Phi^{\alpha}$ with respect to varying parameters, we introduce a set-valued mapping on a countable dataset, denoted as $\mathfrak{F}^{\alpha}_{B}: \mathcal{C}(I, \mathrm{K}(\mathbb{R})) \rightarrow \mathcal{C}(I, \mathrm{K}(\mathbb{R}))$ which is defined by 
\begin{equation}\label{fractope}
    \mathfrak{F}^{\alpha}_{B}(\Phi)=\Phi^{\alpha},~ \text{where}~ \alpha \in (-1,1).
\end{equation}
We call this map $\mathfrak{F}^{\alpha}_{B} $ as a $\alpha$-countable  fractal operator.

\begin{remark}
The $\alpha$-fractal operator on the SVM is defined in \cite{Ver21}. The fractal operator is widely studied for the single-valued maps and bivariate single-valued map. For better understanding, see, \cite{Navascues2005,Navascues2010,sverma3}. Here, we have introduced the concept of the set-valued $\alpha$-fractal operator corresponding to the countable data set and we call it the $\alpha$-countable fractal function. 
   
\end{remark}
In the next theorem, we prove the continuity of $\mathfrak{F}^{\alpha}_B$ ($\alpha$-countable fractal operator) as defined in \eqref{fractope}.

\begin{theorem}\label{fractoperator}
The $\alpha$-countable fractal operator $\mathfrak{F}^{\alpha}_B$ is a continuous map.
\end{theorem}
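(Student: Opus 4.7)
The plan is to show that $\mathfrak{F}^{\alpha}_B$ is in fact Lipschitz continuous (with constant $\frac{1}{1-|\alpha|}$), which immediately implies continuity. The strategy is to take two inputs $\Phi_1,\Phi_2 \in \mathcal{C}(I,\mathrm{K}(\mathbb{R}))$ (both satisfying the endpoint compatibility with $B$ required by Theorem \ref{Thm3.2}), write down the self-referential equation \eqref{self12} for each of the images $\Phi_1^{\alpha}:=\mathfrak{F}^{\alpha}_B(\Phi_1)$ and $\Phi_2^{\alpha}:=\mathfrak{F}^{\alpha}_B(\Phi_2)$, and then subtract them inside the Hausdorff metric using the subadditivity and positive homogeneity recalled in Remark \ref{new71}.

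More precisely, for any $n\in J$ and any $t\in I_n$, I would estimate
\[
H_d\bigl(\Phi_1^{\alpha}(t),\Phi_2^{\alpha}(t)\bigr)\le H_d\bigl(\Phi_1(t),\Phi_2(t)\bigr)+|\alpha|\,H_d\bigl(\Phi_1^{\alpha}(\zeta_n^{-1}(t)),\Phi_2^{\alpha}(\zeta_n^{-1}(t))\bigr),
\]
where the term $\alpha B(\zeta_n^{-1}(t))$ cancels on both sides. Taking the supremum over $t\in I_n$ and then over $n$, and separately handling $t_\infty$ by passing to the limit in \eqref{eqn4} and using continuity of the Hausdorff metric in its arguments, one obtains
\[
\lVert \Phi_1^{\alpha}-\Phi_2^{\alpha}\rVert_{\infty}\le \lVert \Phi_1-\Phi_2\rVert_{\infty}+|\alpha|\,\lVert \Phi_1^{\alpha}-\Phi_2^{\alpha}\rVert_{\infty}.
\]
Since $|\alpha|<1$, rearranging gives
\[
\lVert \mathfrak{F}^{\alpha}_B(\Phi_1)-\mathfrak{F}^{\alpha}_B(\Phi_2)\rVert_{\infty}\le \frac{1}{1-|\alpha|}\,\lVert \Phi_1-\Phi_2\rVert_{\infty},
\]
which proves the claim.

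The only technical nuisance is the very last point $t_\infty$: because the equation there is stated as a limit along the sample sequence $t_n$, I would justify pulling the Hausdorff distance inside the limit by invoking continuity of $H_d$ on $\mathrm{K}(\mathbb{R})\times \mathrm{K}(\mathbb{R})$ together with continuity of $\Phi_j^\alpha$, $\Phi_j$, $B$ and of the iterates $\zeta_n^{-1}$. A secondary (but essentially bookkeeping) point is making sure that the subtraction and scaling steps are legitimate on $\mathrm{K}(\mathbb{R})$; both are covered by the two items listed in Remark \ref{new71}. No contraction argument has to be run anew, since existence of the fixed points $\Phi_1^\alpha$ and $\Phi_2^\alpha$ is guaranteed by Theorem \ref{Thm3.2}; here we are only comparing two such fixed points. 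I expect no further obstacle.
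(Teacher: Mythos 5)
Your proposal is correct and follows essentially the same route as the paper: the paper also subtracts the two self-referential equations inside $H_d$ via Remark \ref{new71}, handles $t_\infty$ by passing to the limit, and arrives at exactly the bound $\lVert \Phi_k^{\alpha}-\Phi^{\alpha}\rVert_{\infty}\le \frac{1}{1-\lvert\alpha\rvert}\lVert \Phi_k-\Phi\rVert_{\infty}$, merely phrased for a convergent sequence $\Phi_k\to\Phi$ rather than as a Lipschitz estimate for an arbitrary pair. Your formulation is marginally cleaner in making the Lipschitz constant explicit, but the underlying computation is identical.
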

\begin{proof}
Consider a convergence sequence $\{\Phi_k\}$ in $\mathcal{C}(I,\mathrm{K}(\mathbb{R}))$, such that, $\{\Phi_k\}$ converges to $\Phi\in \mathcal{C}(I,\mathrm{K}(\mathbb{R})).$  \\To show, $\mathfrak{F}^{\alpha}_B $ is a continuous function, we will show that $\Phi_k^{\alpha}\to \Phi^{\alpha} \text{ as }k\rightarrow\infty.$\\ Since, sequence $\{\Phi_k\}$ converges to $\Phi$, for $\epsilon>0,$ there exists a $N\in \mathbb{N}$ such that
\begin{align*}
    \lVert \Phi_k-\Phi\rVert_{\infty}< \epsilon(1-\lvert\alpha \rvert ) \text{  for all } k \geq N
\end{align*}
Therefore,
\begin{align*}
    \underset{t\in I}{\sup}H_d(\Phi_k(t), \Phi(t)) < \epsilon(1-\lvert\alpha\rvert ) \text{ for all } k \geq N.
    \end{align*}
  For the $t\in I_j$, the above give us that
    \begin{align*}
        H_d(&\Phi_k^{\alpha}(t), \Phi^{\alpha}(t))\\ =&H_{d}\bigg(\Phi_{k}(t)+\alpha\left[\Phi^{\alpha}_{k}(\zeta^{-1}_{j}(t))-B(\zeta^{-1}_{j}(t))\right], \Phi(t)+\alpha\left[\Phi^{\alpha}(\zeta^{-1}_{j}(t))-B(\zeta^{-1}_{j}(t))\right]\bigg)\\
\leq & H_{d}(\Phi_{k}(t),\Phi(t))+\lvert \alpha \rvert H_d(\Phi^{\alpha}_{k}(\zeta^{-1}_{j}(t)),\Phi^{\alpha}(\zeta^{-1}_{j}(t))).
   \end{align*}
   If $t=t_\infty $, we have the following
    \begin{align*}
         H_d(\Phi_k^{\alpha}(t_\infty), \Phi^{\alpha}(t_\infty)) =& \underset{j\rightarrow \infty}{\lim}H_{d}\bigg(\Phi_{k}(t_j)+\alpha\left[\Phi^{\alpha}_{k}(\zeta^{-1}_{j}(t_j))-B(\zeta^{-1}_{j}(t_j))\right], \\&\hspace{3cm}\Phi(t_j)+\alpha\left[\Phi^{\alpha}(\zeta^{-1}_{j}(t_j))-B(\zeta^{-1}_{j}(t_j))\right]\bigg)\\
\leq & \underset{j\rightarrow \infty}{\lim}\Big( H_{d}(\Phi_{k}(t_j),\Phi(t_j))+\lvert \alpha \rvert H_d(\Phi^{\alpha}_{k}(\zeta^{-1}_{j}(t_j)),\Phi^{\alpha}(\zeta^{-1}_{j}(t_j)))\Big)\\ =&  H_{d}(\Phi_{k}(t_\infty),\Phi(t_\infty))+\underset{j\rightarrow \infty}{\lim}\lvert \alpha \rvert H_d(\Phi^{\alpha}_{k}(\zeta^{-1}_{j}(t_j)),\Phi^{\alpha}(\zeta^{-1}_{j}(t_j))).
    \end{align*}
    From above two inequality, we can conclude
    \begin{align*}
         \underset{t\in I}{\sup}~H_d(\Phi^{\alpha}_{k}(t), \Phi^{\alpha}(t)) &\leq \frac{1}{1-\lvert \alpha \rvert}~\underset{t\in I}{\sup}~H_d(\Phi_{k}(t),\Phi(t)).
        \end{align*}
        This implies,
        \begin{align*}
            \lVert \Phi_k^{\alpha} (t)- \Phi^\alpha(t)\rVert _\infty < \epsilon \text{ for all } k\geq N \text{ and } t \in I.
        \end{align*}
This establishes the proof.

\end{proof}
\section{Some Approximation Aspect}\label{sec4}

In previous section, we have seen that $\Phi^{\alpha}$ is defined by the following self-referential equation:
 \[\Phi^{\alpha}(t)= \Phi(t) + \alpha  \left[\Phi^{\alpha}\left(\zeta_n^{-1}(t)\right) - B\left(\zeta_n^{-1}(t)\right)\right] \text{ for every }  t \in I_n,\]
where $n \in J =\{1,2,3,\ldots\}$ and for the $t_\infty$ is given by
 \[  \Phi^{\alpha}(t_\infty)=\underset{n \rightarrow \infty}{\lim}\Big( \Phi(t_n) + \alpha [\Phi^{\alpha}(\zeta_n^{-1}(t_n)) - B(\zeta_n^{-1}(t_n))]\Big).\]

In this section we will study the approximation of the set-valued fractal function taken over the countable data set.
 
For the following proposition, we took the motivation from the \cite[Proposition $1$]{Ver21} in which the error bound between a SVM and its associated $\alpha$-fractal function is calculated. In the literature \cite{Navascues04,sverma2020}, the error bound between a univarite and bivarite single valued map and their associated $\alpha$-fractal function is calculated. In the following proposition, we have found the error bound between a SVM and $\alpha$-countable fractal function.

 \begin{proposition}\label{new79}
 For any $\Phi\in \mathcal{C}(I,\mathrm{K}(\mathbb{R}))$ and its corresponding $\Phi^{\alpha}.$ The error bound is given by:
   \[\lVert \Phi^{\alpha} -\Phi\rVert _{\infty} \leq \frac{\lvert \alpha \rvert}{1- \lvert \alpha \rvert} \lVert \Phi-B \rVert _{\infty}+\frac{2 \lvert \alpha \rvert}{1- \lvert \alpha \rvert} \lVert \Phi \rVert_{\infty}.\]
 \end{proposition}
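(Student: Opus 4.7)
The plan is to apply the self-referential equation \eqref{self12} pointwise and push the work onto the quantity $H_d(\Phi^\alpha(\zeta_n^{-1}(t)) - B(\zeta_n^{-1}(t)), \{0\})$. For $t \in I_n$, writing $s = \zeta_n^{-1}(t)$ and using the Minkowski-sum bound from Remark \ref{new71} with $S_1 = S_3 = \Phi(t)$, $S_2 = \alpha[\Phi^\alpha(s) - B(s)]$ and $S_4 = \{0\}$, together with $H_d(\alpha A, \alpha B) = \lvert \alpha \rvert H_d(A,B)$, I immediately obtain the pointwise estimate $H_d(\Phi^\alpha(t), \Phi(t)) \leq \lvert \alpha \rvert\, H_d(\Phi^\alpha(s) - B(s), \{0\})$.

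The next step is to bound the right-hand side in terms of $\lVert \Phi - B \rVert_\infty$, $\lVert \Phi \rVert_\infty$, and $\lVert \Phi^\alpha - \Phi \rVert_\infty$. The key observation is that the set-valued difference satisfies $\Phi^\alpha(s) - B(s) = \Phi^\alpha(s) + (-B(s))$ in the Minkowski sense, so a second application of Remark \ref{new71} with $S_3 = S_4 = \{0\}$ yields $H_d(\Phi^\alpha(s) - B(s), \{0\}) \leq H_d(\Phi^\alpha(s), \{0\}) + H_d(B(s), \{0\})$, where I used $H_d(-B(s),\{0\}) = H_d(B(s),\{0\})$. Each term is then controlled by inserting $\Phi(s)$ via the triangle inequality for $H_d$: namely $H_d(\Phi^\alpha(s), \{0\}) \leq H_d(\Phi^\alpha(s), \Phi(s)) + H_d(\Phi(s), \{0\})$ and $H_d(B(s), \{0\}) \leq H_d(B(s), \Phi(s)) + H_d(\Phi(s), \{0\})$. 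Summing and taking the supremum in $s$ gives the clean estimate $H_d(\Phi^\alpha(s) - B(s), \{0\}) \leq \lVert \Phi^\alpha - \Phi \rVert_\infty + \lVert \Phi - B \rVert_\infty + 2\lVert \Phi \rVert_\infty$.

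For the endpoint $t = t_\infty$ I use \eqref{eqn4}: the same chain of inequalities applies verbatim with $t_n$ in place of $t$, and passing to the limit $n \to \infty$ preserves the bound because $H_d$ is continuous under the limit defining $\Phi^\alpha(t_\infty)$. Combining the previous two displays and taking the supremum over $t \in I$ produces the self-bounding inequality $\lVert \Phi^\alpha - \Phi \rVert_\infty \leq \lvert \alpha \rvert \bigl(\lVert \Phi^\alpha - \Phi \rVert_\infty + \lVert \Phi - B \rVert_\infty + 2\lVert \Phi \rVert_\infty\bigr)$. Since $\lvert \alpha \rvert < 1$, I can move the $\lvert \alpha \rvert \lVert \Phi^\alpha - \Phi \rVert_\infty$ term to the left and divide by $1 - \lvert \alpha \rvert$ to reach the claimed estimate. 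The main obstacle is the bookkeeping in the second step: one has to recognize that the set difference appearing in the RB operator is a Minkowski-type sum $A + (-B)$ rather than a genuine algebraic inverse, so that the two properties of Remark \ref{new71} can legitimately be applied in succession; once this identification is made, the rest is a routine triangle-inequality chase together with the standard fixed-point bootstrapping.
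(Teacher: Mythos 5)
Your proposal is correct and follows essentially the same route as the paper's proof: apply the self-referential equation, use the scaling and Minkowski-sum properties of $H_d$ to reduce to $\lvert\alpha\rvert\,H_d\bigl(\Phi^{\alpha}(\zeta_n^{-1}(t))-B(\zeta_n^{-1}(t)),\{0\}\bigr)$, bound this by $\lVert \Phi^{\alpha}-\Phi\rVert_{\infty}+\lVert \Phi-B\rVert_{\infty}+2\lVert\Phi\rVert_{\infty}$, treat $t_\infty$ by the limit, and solve the resulting self-bounding inequality using $\lvert\alpha\rvert<1$. The only difference is cosmetic: you split $\Phi^{\alpha}(s)+(-B(s))$ against $\{0\}+\{0\}$ and then insert $\Phi(s)$ by the triangle inequality, whereas the paper compares against the intermediate set $\Phi(s)-\Phi(s)$ first; both orderings of the same estimates give the identical final bound.
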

 \begin{proof}
Using the Hausdorff metric property given in Note \ref{new71} and self-referential equation, we get 
\begin{align*}
   & H_d(\Phi^{\alpha}(t), \Phi(t))\\
   =& H_d\bigg(\Phi(t) + \alpha  \left[\Phi^{\alpha}\left(\zeta_n^{-1}(t)\right) -                 B\left(\zeta_n^{-1}(t)\right)\right], ~\Phi(t)\bigg)\\ 
   \leq &  H_d\bigg( \alpha  \left[\Phi^{\alpha}\left(\zeta_n^{-1}(t)\right) -        B\left(\zeta_n^{-1}(t)\right)\right], ~\{0\}\bigg)=\lvert \alpha \rvert H_d\bigg(\Phi^{\alpha}\left(\zeta_n^{-1}(t)\right)- B\left(\zeta_n^{-1}(t)\right), ~\{0\}\bigg)\\
    \leq&  \lvert \alpha \rvert H_d\bigg(\Phi^{\alpha}\left(\zeta_n^{-1}(t)\right) -  B\left(\zeta_n^{-1}(t)\right),~ \Phi\left(\zeta_n^{-1}(t)\right)-\Phi\left(\zeta_n^{-1}(t)\right)\bigg)\\
    &\hspace{6cm}+\lvert \alpha \rvert H_d\bigg(\Phi\left(\zeta_n^{-1}(t)\right)-\Phi\left(\zeta_n^{-1}(t)\right), \{0\}\bigg)\\
    \leq &\lvert \alpha \rvert H_d\bigg(\Phi^{\alpha}\left(\zeta_n^{-1}(t)\right),  \Phi\left(\zeta_n^{-1}(t)\right)\bigg)+\lvert \alpha \rvert H_d\bigg(- B\left(\zeta_n^{-1}(t)\right),-\Phi\left(\zeta_n^{-1}(t)\right)\bigg)\\
    &\hspace{6cm}+2 \lvert \alpha \rvert H_d\bigg(\Phi\left(\zeta_n^{-1}(t)\right),\{0\}\bigg).
    \end{align*}
    Further taking the supremum over $t \in I_n$ where $n\in J=\{1,2,3\dots \}$. From this
     \begin{align*}
    H_d(\Phi^{\alpha}(t), \Phi(t)) &\leq \lvert \alpha \rvert \sup_{t\in I_n, n \in J}H_d\bigg(   \Phi^{\alpha}\left(\zeta_n^{-1}(t)\right),  \Phi\left(\zeta_j^{-1}(t)\right)\bigg)+\\ & \hspace{2cm}\lvert \alpha \rvert \sup_{t\in I_n, n \in J}
     H_d\bigg(B\left(\zeta_n^{-1}(t)\right),\Phi\left(\zeta_n^{-1}(t)\right)\bigg)\\
     & \hspace{2cm}+2 \lvert \alpha \rvert \sup_{t \in I_n, n \in J}H_d\bigg(  \Phi\left(\zeta_n^{-1}(t)\right),\{0\}\bigg)\\
     & \leq \lvert \alpha \rvert \lVert \Phi^{\alpha}-\Phi \rVert _{\infty}+ \lvert \alpha \rvert \lVert \Phi-B \rVert _{\infty}+2 \lvert \alpha \rvert  \lVert \Phi \rVert_{\infty}.
   \end{align*}
 Similar inequality we can show at the $t_\infty$ also. This implies,
 \begin{align*}
      \lVert \Phi^{\alpha}-\Phi \rVert _{\infty}\le  \lvert \alpha \rvert \lVert \Phi^{\alpha}-\Phi \rVert _{\infty}+ \lvert \alpha \rvert \lVert \Phi-B \rVert _{\infty}+2 \lvert \alpha \rvert  \lVert \Phi \rVert_{\infty}.
 \end{align*}
From this, we get the required error bound.
 \end{proof}
Before proceeding with the next result on constrained approximation in the context of set-valued fractal functions, let us first establish the following lemma.
 
\begin{lemma}\label{lem4.6}
 The set $S=\underset{k \in \mathbb{N}}{\bigcup}\left(\underset{1\leq i_1,\ldots,i_k< \infty}{\bigcup}\zeta_{i_{1}\ldots i_{k}}\bigg(\big\{t_1,t_2,t_3\ldots,t_\infty = \lim t_n \big\}\bigg)\right)$ is dense in interval $I=[0,1]$, where $\zeta_{i_{1}\ldots i_{k}}(t)=\zeta_{i_{1}}(\zeta_{i_{2}}(\ldots (\zeta_{i_{k}}(t))))$ and $k \in \mathbb{N}.$
\end{lemma}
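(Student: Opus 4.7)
The plan is to show $\overline{S} = I$ by proving every $t \in I = [t_1, t_\infty]$ lies arbitrarily close to a point of $S$. First I would observe that the partition points $\{t_n : n \geq 1\}$ all lie in $S$: the endpoint conditions on $\zeta_n$ force $\zeta_n(t_1) \in \{t_n, t_{n+1}\}$, so already the $k = 1$ layer of the union captures every finite $t_n$. The limit point $t_\infty$ is itself not in $S$, because every image $\zeta_{i_1 \cdots i_k}(I)$ is contained in $[t_{i_1}, t_{i_1+1}]$ with $i_1$ finite, but $t_n \to t_\infty$ places it in $\overline{S}$.

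For any other $t \in I$ I would build an address recursively. There is a unique $n_1$ with $t \in \mathrm{int}(I_{n_1})$; set $t^{(1)} := \zeta_{n_1}^{-1}(t) \in (t_1, t_\infty)$ and iterate. Either (a) the process terminates at some step $k$ with $t^{(k)} \in \{t_1, t_2, \ldots, t_\infty\}$, giving $t = \zeta_{n_1 \cdots n_k}(t^{(k)}) \in S$ directly, or (b) an infinite address $(n_1, n_2, \ldots)$ arises, producing nested cylinder intervals $I_{n_1 \cdots n_k} := \zeta_{n_1} \circ \cdots \circ \zeta_{n_k}(I)$ that all contain $t$ and whose endpoints $\zeta_{n_1 \cdots n_k}(t_1)$ and $\zeta_{n_1 \cdots n_k}(t_\infty)$ lie in $S$.

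The main obstacle is to show $|I_{n_1 \cdots n_k}| \to 0$ in case (b), since the CIFS has no uniform contraction ratio: each $c_n < 1$ but $\sup_n c_n$ could equal $1$. My plan is to dichotomise on the address. If $(n_k)$ is bounded above by some $M$, then $c^* := \max_{n \leq M} c_n < 1$ and $|I_{n_1 \cdots n_k}| \leq (c^*)^k |I| \to 0$. If $(n_k)$ is unbounded, I pass to a subsequence $n_{k_j} \to \infty$; convergence of $(t_n)$ forces $|I_{n_{k_j}}| = t_{n_{k_j}+1} - t_{n_{k_j}} \to 0$, and the factorisation $I_{n_1 \cdots n_{k_j}} = \zeta_{n_1 \cdots n_{k_j - 1}}(I_{n_{k_j}})$ together with a prefix Lipschitz constant bounded by $1$ yields $|I_{n_1 \cdots n_{k_j}}| \leq |I_{n_{k_j}}| \to 0$. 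In either sub-case the two cylinder endpoints in $S$ approximate $t$ to within the shrinking diameter, so $t \in \overline{S}$ and the density follows.
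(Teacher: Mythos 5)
Your proposal is correct, and it takes a genuinely different route from the paper's proof. The paper argues globally: every $t\in I$ lies within $\max_{i\in J}\frac{t_i-t_{i-1}}{2}$ of a partition point, and passing to depth-$k$ cylinders scales this to $a^{k}\max_{i\in J}\frac{t_i-t_{i-1}}{2}<\epsilon$, where $a=\max_{i\in J}a_i$ is a \emph{uniform} contraction ratio taken over the infinite index set $J$. That step tacitly assumes $\sup_{i\in J}a_i<1$ (and that the maximum is attained), which does not follow from the hypothesis that each $\zeta_i$ is merely a contraction; your observation that $\sup_n c_n$ could equal $1$ is exactly the point the paper glosses over. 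You instead track the itinerary of an individual point: either the orbit $t^{(k)}=\zeta_{n_k}^{-1}(\cdots)$ hits a partition point, putting $t$ in $S$ outright, or you get nested cylinders $I_{n_1\cdots n_k}\ni t$ whose endpoints $\zeta_{n_1\cdots n_k}(t_1),\zeta_{n_1\cdots n_k}(t_\infty)$ lie in $S$, and you force the diameters to shrink by the bounded/unbounded dichotomy on the address: a bounded address gives a finite maximum $c^{*}<1$ and geometric decay, while an unbounded address uses $t_{n+1}-t_n\to 0$ (from convergence of $(t_n)$) together with the trivial Lipschitz bound $\le 1$ on the prefix map, and nestedness makes the subsequential bound suffice. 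Your route is longer but needs no uniform contractivity, so it actually proves the lemma under the paper's stated hypotheses, whereas the paper's shorter argument is only complete under the additional (unstated) assumption $\sup_{i\in J}a_i<1$; the boundary cases you flag ($t_n\in S$ via the $k=1$ layer, $t_\infty\in\overline{S}$ as a limit of the $t_n$) are handled only implicitly in the paper.
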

\begin{proof}
For any point $t \in I$. We can obtain some $w_1\in \left\{t_1,t_2,t_3 \ldots,t_\infty\right\}$, such that $\lvert t-w_1 \rvert\leq \underset{i\in J}{\max} \left\{\frac{t_i-t_{i-1}}{2}\right\}$. Since $\zeta_i$ is a contraction mapping for each $i\in J$ with the contraction coefficient $a_i$. Choose $a=\underset{i\in J}{\max} \{a_i\}$, then for each $t \in I$ and for given $\epsilon>0$ we can choose $w_2\in \zeta_{i_{1}\ldots i_{k}}\big(\left\{t_1,t_2,t_3\ldots,t_\infty\right\}\big)$ for some $k\in \mathbb{N}$ such that,
\[\lvert t-w_2 \rvert\leq a^{k}\underset{i\in J}{\max} \left\{\frac{t_i-t_{i-1}}{2}\right\}<\epsilon.\]
This completes the proof.
\end{proof}

\begin{theorem}\label{Thm4.7}
Let $\Phi,\mathcal{U} \in \mathcal{C}(I , \mathrm{K}( \mathbb{R}))$ and $\vartheta$ as defined in Theorem \ref{Thm3.2}, and $\Phi(t_1),~\Phi(t_\infty),\\ ~\mathcal{U}(t_1),~\mathcal{U}(t_\infty)$ are single-valued. If $\Phi \le \mathcal{U}$, then $\Phi^{\alpha} \le \mathcal{U}^{\alpha}$ provided $B_\Phi, B_\mathcal{U} \in \mathcal{C}(I , \mathrm{K}( \mathbb{R}))$ satisfying $B_\Phi \le B_\mathcal{U}$ and $B_\Phi(t_1)=\Phi(t_1),B_\Phi(t_\infty)=\Phi(t_\infty),B_\mathcal{U}(t_1)=\mathcal{U}(t_1),B_\mathcal{U}(t_\infty)=\mathcal{U}(t_\infty).$
\end{theorem}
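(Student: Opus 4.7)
The relation $\Phi^\alpha \le \mathcal{U}^\alpha$ means pointwise set containment $\Phi^\alpha(t) \subseteq \mathcal{U}^\alpha(t)$ for every $t \in I$, by Definition \ref{order}. The plan is to establish this by checking the inclusion at the partition points, propagating it to all iterated images of partition points under the maps $\zeta_n$, and then closing up by density.

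I first record the routine fact that each operation appearing in the self-referential equation preserves inclusion: if $A_j \subseteq C_j$ in $\mathrm{K}(\mathbb{R})$ for $j=1,2$, then $A_1 + A_2 \subseteq C_1 + C_2$, $A_1 - A_2 \subseteq C_1 - C_2$ (with the Minkowski-style difference of Note \ref{unchange}), and $\alpha A_1 \subseteq \alpha C_1$ for every real $\alpha$, each from the elementwise definitions. The first step is to observe that, under the single-valuedness and endpoint hypotheses $B_\Phi(t_1)=\Phi(t_1)$, $B_\Phi(t_\infty)=\Phi(t_\infty)$, the values of $\Phi^\alpha$ at the partition points are forced to agree with $\Phi$; this is exactly the content of Note \ref{Nt3.4}. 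The same is true of $\mathcal{U}^\alpha$, so $\Phi^\alpha(t_i) = \Phi(t_i) \subseteq \mathcal{U}(t_i) = \mathcal{U}^\alpha(t_i)$ for every $t_i \in \vartheta$.

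The second step is an induction on the composition length $k$, claiming $\Phi^\alpha(w) \subseteq \mathcal{U}^\alpha(w)$ for every $w$ of the form $\zeta_{i_1 \cdots i_k}(t_j)$ with $t_j \in \vartheta$. The base case $k=0$ is Step~1. For the inductive step, write $w' = \zeta_{i_1}(s)$ with $s = \zeta_{i_2 \cdots i_{k+1}}(t_j)$; then $w' \in I_{i_1}$ with $\zeta_{i_1}^{-1}(w') = s$, and the inductive hypothesis yields $\Phi^\alpha(s) \subseteq \mathcal{U}^\alpha(s)$. Applying the self-referential equation \eqref{self12} at $w'$ to both $\Phi^\alpha$ and $\mathcal{U}^\alpha$, together with $\Phi(w') \subseteq \mathcal{U}(w')$ and $B_\Phi(s) \subseteq B_\mathcal{U}(s)$, the inclusion-preserving operations give
\begin{align*}
\Phi^\alpha(w') = \Phi(w') + \alpha\bigl[\Phi^\alpha(s) - B_\Phi(s)\bigr] \subseteq \mathcal{U}(w') + \alpha\bigl[\mathcal{U}^\alpha(s) - B_\mathcal{U}(s)\bigr] = \mathcal{U}^\alpha(w').
\end{align*}

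To close, the union $S$ of all such iterates is dense in $I$ by Lemma \ref{lem4.6}. I then introduce the continuous one-sided distance $f : I \to \mathbb{R}$ given by $f(t) = \sup\{d(x, \mathcal{U}^\alpha(t)) : x \in \Phi^\alpha(t)\}$. Standard Hausdorff estimates (using $|f(t) - f(t')| \le H_d(\Phi^\alpha(t), \Phi^\alpha(t')) + H_d(\mathcal{U}^\alpha(t), \mathcal{U}^\alpha(t'))$) show $f$ is continuous on $I$, and Step~2 implies $f$ vanishes on $S$. Result \ref{res 4.5} then forces $f(t) \le 0$ on all of $I$, and combined with $f \ge 0$ this yields $f \equiv 0$. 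Compactness of $\mathcal{U}^\alpha(t)$ finally upgrades $d(x, \mathcal{U}^\alpha(t)) = 0$ to $x \in \mathcal{U}^\alpha(t)$, establishing $\Phi^\alpha(t) \subseteq \mathcal{U}^\alpha(t)$ for every $t \in I$, i.e., $\Phi^\alpha \le \mathcal{U}^\alpha$.

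The main obstacle I expect is the bookkeeping in Step~2 --- verifying that the self-referential equation at $w' \in I_{i_1}$ really couples to the inductive hypothesis at precisely the preimage $\zeta_{i_1}^{-1}(w')$ handled by the previous layer. The Minkowski-arithmetic step and the density-plus-continuity conclusion via Result \ref{res 4.5} are routine once the one-sided distance $f$ is in place.
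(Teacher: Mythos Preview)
Your proof is correct and follows essentially the same route as the paper: Note \ref{Nt3.4} to pin down the partition values, induction on composition length via the self-referential equation, then density (Lemma \ref{lem4.6}) plus continuity (Result \ref{res 4.5}) to close. The one place you add genuine content is the final step: the paper simply invokes Lemma \ref{lem4.6} and Result \ref{res 4.5} without saying which real-valued continuous function the latter is being applied to, whereas you make this explicit via the one-sided Hausdorff distance $f(t)=\sup_{x\in\Phi^\alpha(t)}d(x,\mathcal{U}^\alpha(t))$, which is exactly the right bridge from set inclusion to a scalar inequality.
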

\begin{proof}
Let $B_\Phi, B_\mathcal{U} \in \mathcal{C}(I , \mathrm{K}( \mathbb{R}) )$ such that $B_\Phi \le B_\mathcal{U}$ and $B_\Phi(t_1)=\Phi(t_1),~B_\Phi(t_\infty)=\Phi(t_\infty),~B_\mathcal{U}(t_1)=\mathcal{U}(t_1),~B_\mathcal{U}(t_\infty)=\mathcal{U}(t_\infty).$
Using Note \ref{Nt3.4}, we have 
\[\Phi^{\alpha}(t_i)=\Phi(t_i),~\mathcal{U}^{\alpha}(t_i)=\mathcal{U}(t_i) \text{ for each } i=1,2,3\ldots \]
From the self-referential equation, 
\[ \Phi^{\alpha}\left(\zeta_j(t)\right)= \Phi\left(\zeta_j(t)\right) + \alpha  \left[\Phi^{\alpha}(t) - B_\Phi(t)\right]\] \\ and, \[\mathcal{U}^{\alpha}\left(\zeta_j(t)\right)= \mathcal{U}\left(\zeta_j(t)\right) + \alpha  \left[\mathcal{U}^{\alpha}(t) - B_\mathcal{U}(t)\right]\]
 for each $ t \in I_j, $ where $j \in J.$ For $t \in \vartheta,$ we deduce \[\Phi^{\alpha}\big(\zeta_j(t)\big) \le \mathcal{U}^{\alpha}\big(\zeta_j(t)\big) \text{ for any } j \in J.\]
 Repeating the above process $k\in \mathbb{N}$ times, we get
 \[\Phi^{\alpha}\left(\zeta_{i_{1}\ldots i_{k}}(t)\right)\le \mathcal{U}^{\alpha}\left(\zeta_{i_{1}\ldots i_{k}}(t)\right) \text{ for any } i_1,\ldots, i_k \in J, ~ t \in \{t_1,t_2,t_3\ldots,t_\infty\},\]
 where $\zeta_{i_{1}\ldots i_{k}}(t)=\zeta_{i_{1}}(\zeta_{i_{2}}(\ldots (\zeta_{i_{k}}(t))))$ and $k \in \mathbb{N}.$\\
 It follows that $\Phi^{\alpha}(t)\le \mathcal{U}^{\alpha}(t)$ for every  $t\in \underset{k \in \mathbb{N}}{\cup}\left(\underset{1\leq i_1,\ldots,i_k< \infty}{\cup}\zeta_{i_{1}\ldots i_{k}}\big(\left\{t_1,t_2,t_3,\ldots,t_\infty\right\}\big)\right).$ \\
Therefore, applying Lemma \ref{lem4.6} and Result \ref{res 4.5}, completes the proof.
\end{proof}

 \begin{definition} \label{newg}
 Consider a set-valued function $\Phi:I\rightarrow \mathrm{K}(\mathbb{R})$. The graph of $\Phi$ is defined as
 \begin{equation}\label{grph}
    \mathcal{G}(\Phi)=\left\{(t,\Phi(t)): \Phi(t)\in \mathrm{K}(\mathbb{R}) \right\}\subset I\times \mathrm{K}(\mathbb{R}).  
   \end{equation}
     Define a metric on the graph of this set valued function; \[d_{\mathcal{G}}((t,\Phi(t)),(w,\Phi(w)))=\lvert t-w\rvert+H_d(\Phi(t),\Phi(w)).\]
   \end{definition}
   Next, we will show that the graph of set valued fractal function $\Phi^{\alpha}$ as defined in (\ref{grph}) is an attractor of an countable iterative function system (CIFS) defined on $I\times \mathrm{K}_c(\mathbb{R})$.\\
Before proving the next theorem. We take note of the following lemma.

 \begin{lemma} \cite{Ver21} \label{lemma5.1}
Let $\Phi\in \mathcal{C}(I, \mathrm{K}_c(\mathbb{R}))$ be a set-valued continuous map and $\Phi^{\alpha}$ be its corresponding $\alpha$-fractal function. Define a function $D:\big(I\times \mathrm{K}_c(\mathbb{R})\big) \times \big(I\times \mathrm{K}_c(\mathbb{R})\big) \rightarrow [0, \infty)$ as
 \[D\big((t,S_1),(w,S_2)\big)=\lvert t-w \rvert+H_d\big(S_1+\Phi^{\alpha}(w),S_2+\Phi^{\alpha}(t)\big).\]
  Then $(I\times \mathrm{K}_c(\mathbb{R}),D)$ is a complete metric space.
 \end{lemma}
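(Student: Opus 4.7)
The plan is to verify the metric axioms for $D$ first, then establish completeness. Non-negativity and symmetry are immediate from the corresponding properties of $\lvert\cdot\rvert$ and $H_d$. For the identity of indiscernibles, if $D((t,S_1),(w,S_2))=0$ then $t=w$, which forces $H_d(S_1+\Phi^{\alpha}(t),S_2+\Phi^{\alpha}(t))=0$ and hence $S_1+\Phi^{\alpha}(t)=S_2+\Phi^{\alpha}(t)$; since elements of $\mathrm{K}_c(\mathbb{R})$ are closed bounded intervals and Minkowski addition of intervals is componentwise (and therefore cancellative), we obtain $S_1=S_2$. For the triangle inequality among $(t_i,S_i)$, $i=1,2,3$, the scalar piece is standard; for the Hausdorff piece, the key idea is to adjoin $\Phi^{\alpha}(t_2)$ to both arguments of $H_d(S_1+\Phi^{\alpha}(t_3),S_3+\Phi^{\alpha}(t_1))$, which leaves the distance unchanged because Minkowski translation by a compact convex set is an $H_d$-isometry in $\mathbb{R}$, and then insert the intermediate set $S_2+\Phi^{\alpha}(t_1)+\Phi^{\alpha}(t_3)$ via the $H_d$-triangle inequality; cancelling the common convex summand in each of the two resulting terms returns exactly $H_d(S_1+\Phi^{\alpha}(t_2),S_2+\Phi^{\alpha}(t_1))+H_d(S_2+\Phi^{\alpha}(t_3),S_3+\Phi^{\alpha}(t_2))$.

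For completeness, take a $D$-Cauchy sequence $\{(t_n,S_n)\}$. The projection $\{t_n\}$ is Cauchy in $I$ and converges to some $t\in I$, and by continuity of $\Phi^{\alpha}\in\mathcal{C}(I,\mathrm{K}(\mathbb{R}))$ one has $\Phi^{\alpha}(t_n)\to\Phi^{\alpha}(t)$ in $H_d$. Setting $T_n:=S_n+\Phi^{\alpha}(t)$, the estimate
\[H_d(T_n,T_m)\leq H_d(\Phi^{\alpha}(t),\Phi^{\alpha}(t_m))+H_d(S_n+\Phi^{\alpha}(t_m),S_m+\Phi^{\alpha}(t_n))+H_d(\Phi^{\alpha}(t_n),\Phi^{\alpha}(t)),\]
obtained by inserting $S_n+\Phi^{\alpha}(t_m)$ and $S_m+\Phi^{\alpha}(t_n)$ as intermediate sets and using translation-isometry, together with the $D$-Cauchy property shows $\{T_n\}$ is Cauchy in $(\mathrm{K}_c(\mathbb{R}),H_d)$, hence converges to some $T\in\mathrm{K}_c(\mathbb{R})$. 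To recover a limit for $\{S_n\}$, I use the interval representation: writing $S_n=[a_n,b_n]$ and $\Phi^{\alpha}(t)=[e_1,e_2]$, the convergence $T_n=[a_n+e_1,b_n+e_2]\to T=[c_1,c_2]$ in $H_d$ forces $a_n\to c_1-e_1$ and $b_n\to c_2-e_2$. The candidate $S:=[c_1-e_1,c_2-e_2]$ is then a valid element of $\mathrm{K}_c(\mathbb{R})$ (its length equals $\lim(b_n-a_n)\geq 0$) satisfying $S+\Phi^{\alpha}(t)=T$, and $S_n\to S$ in $H_d$. A final direct $D$-triangle estimate together with $S_n\to S$ and $\Phi^{\alpha}(t_n)\to\Phi^{\alpha}(t)$ gives $(t_n,S_n)\to(t,S)$ in $D$, as required.

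The main subtlety throughout is that Minkowski addition on $\mathrm{K}(\mathbb{R})$ is not cancellative in general, whereas on $\mathrm{K}_c(\mathbb{R})$ it is. Convexity and one-dimensionality together are what make the identity-of-indiscernibles step, the isometric cancellation used in the triangle inequality, and the reconstruction of $S$ from $T$ all go through cleanly; this is precisely why the ambient space of the lemma must be $\mathrm{K}_c(\mathbb{R})$ rather than the larger $\mathrm{K}(\mathbb{R})$. I expect the reconstruction step to be the one that requires the most care, since it is the place where the non-invertibility of Minkowski addition is finessed via the explicit endpoint representation of intervals.
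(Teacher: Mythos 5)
This lemma is stated in the paper without proof (it is imported from \cite{Ver21}), and your argument is correct and follows essentially the same standard route as that source: the metric axioms via cancellation of a common Minkowski summand (trivial for intervals, Rådström cancellation in general) and completeness via the auxiliary sequence $T_n=S_n+\Phi^{\alpha}(t)$ with endpoint reconstruction of $S$. Two small points to make explicit: the cancellation/isometry $H_d(A+C,B+C)=H_d(A,B)$ requires convexity of the sets $A,B$ being compared (the added set $C$ need only be bounded), and accordingly you are implicitly using that $\Phi^{\alpha}$ is itself convex-valued so that every set you cancel or write as $[e_1,e_2]$ is an interval; this holds because the RB operator maps $\mathcal{C}(I,\mathrm{K}_c(\mathbb{R}))$ into itself, the same fact the paper invokes in Theorem \ref{theorem4.8}, and deserves one sentence in the write-up.
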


 \begin{proposition}\label{prop5.6}
Let $B\in \mathcal{C}(I, \mathrm{K}_c(\mathbb{R}))$ be the base function and $\Phi\in \mathcal{C}(I, \mathrm{K}_c(\mathbb{R}))$ be a set-valued continuous map. For each $j\in J=\{1,2,3,\ldots\}$, define $G_{j}: I \times \mathrm{K}_c(\mathbb{R})\rightarrow I \times \mathrm{K}_c(\mathbb{R})$, such that
 \begin{equation*}\label{cifs}
     \begin{aligned}
 G_{j}(t,S)=\left(\zeta_{j}(t), \alpha S+\Phi(\zeta_{j}(t))-\alpha B(t)\right).
  \end{aligned}
 \end{equation*}
 Then with respect to the metric defined in Lemma \ref{lemma5.1}, each $G_j$ is a contraction map, provided $\max\{\lvert \alpha \rvert,a_j\}<1$ for each $j\in J$.
 \end{proposition}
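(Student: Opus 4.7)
The plan is to bound $D(G_j(t,S_1), G_j(w,S_2))$ directly from the definition of $D$, handling the two coordinates separately. The first coordinate is an easy application of the contraction property of $\zeta_j$; the real work is in manipulating the Hausdorff-distance term in the second coordinate so that the self-referential equation for $\Phi^\alpha$ can be used to extract a common Minkowski translate that then cancels.

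\textbf{First coordinate.} I would begin by writing
\[
D\bigl(G_j(t,S_1), G_j(w,S_2)\bigr) = \lvert \zeta_j(t)-\zeta_j(w)\rvert + H_d\bigl(A_1, A_2\bigr),
\]
where $A_1 = \alpha S_1 + \Phi(\zeta_j(t)) - \alpha B(t) + \Phi^{\alpha}(\zeta_j(w))$ and $A_2 = \alpha S_2 + \Phi(\zeta_j(w)) - \alpha B(w) + \Phi^{\alpha}(\zeta_j(t))$. The contraction property of $\zeta_j$ immediately gives $\lvert \zeta_j(t)-\zeta_j(w)\rvert \le a_j \lvert t-w\rvert$.

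\textbf{Second coordinate (set-algebra).} The main step is to invoke the self-referential equation from Theorem \ref{Thm3.2}, which yields
\[
\Phi^{\alpha}(\zeta_j(u)) = \Phi(\zeta_j(u)) + \alpha \Phi^{\alpha}(u) - \alpha B(u) \quad \text{for } u \in I.
\]
Substituting this into $A_1$ (with $u = w$) and into $A_2$ (with $u = t$), and using commutativity and associativity of Minkowski sum, both $A_1$ and $A_2$ acquire the common translate $X := \Phi(\zeta_j(t)) + \Phi(\zeta_j(w)) - \alpha B(t) - \alpha B(w)$. More precisely, $A_1 = \alpha S_1 + \alpha \Phi^{\alpha}(w) + X$ and $A_2 = \alpha S_2 + \alpha \Phi^{\alpha}(t) + X$. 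Applying the subadditivity property $H_d(S_1+S_2, S_3+S_4) \le H_d(S_1,S_3) + H_d(S_2,S_4)$ from Remark \ref{new71} with the second pair taken as $X, X$, the common $X$ cancels; then applying the scalar property $H_d(\alpha A, \alpha B) = \lvert \alpha\rvert H_d(A,B)$ from the same remark and distributivity of scalar multiplication over Minkowski sum, I obtain
\[
H_d(A_1, A_2) \le \lvert \alpha\rvert\, H_d\bigl(S_1 + \Phi^{\alpha}(w),\, S_2 + \Phi^{\alpha}(t)\bigr).
\]

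\textbf{Combining.} Adding the two coordinate bounds,
\[
D\bigl(G_j(t,S_1), G_j(w,S_2)\bigr) \le a_j \lvert t-w\rvert + \lvert \alpha\rvert\, H_d\bigl(S_1+\Phi^{\alpha}(w), S_2+\Phi^{\alpha}(t)\bigr) \le \max\{a_j, \lvert\alpha\rvert\}\, D\bigl((t,S_1),(w,S_2)\bigr),
\]
so $G_j$ is a contraction with ratio $\max\{a_j,\lvert\alpha\rvert\} < 1$. A brief preliminary remark would note that $G_j$ is genuinely valued in $I \times \mathrm{K}_c(\mathbb{R})$, since Minkowski sums and scalar multiples preserve compactness and convexity.

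\textbf{Main obstacle.} The delicate point is the set-algebra manipulation: one must interpret the ``subtraction'' $\mathcal{A}_1 - \mathcal{A}_2$ in the self-referential equation (per Note \ref{unchange}) as Minkowski sum with $-\mathcal{A}_2$, then rearrange freely using commutativity, associativity, and compatibility with scalar multiplication, to expose the common translate $X$ that the subadditivity estimate in Remark \ref{new71} can eliminate. Once this bookkeeping is carried out cleanly, the contraction constant drops out mechanically.
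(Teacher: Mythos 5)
Your proposal is correct and follows essentially the same route as the paper: expand $D(G_j(t,S_1),G_j(w,S_2))$ from the definition, substitute the self-referential equation for $\Phi^{\alpha}(\zeta_j(t))$ and $\Phi^{\alpha}(\zeta_j(w))$, cancel the common Minkowski translate, and pull out $\lvert\alpha\rvert$ to obtain the ratio $\max\{a_j,\lvert\alpha\rvert\}$. If anything, your justification of the cancellation via the subadditivity of $H_d$ together with $H_d(X,X)=0$ is slightly more explicit than the paper's, which writes that step as an equality.
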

 
 \begin{proof}
 Let $(t,S_1),(w,S_2)\in I \times \mathrm{K}_c(\mathbb{R})$, then for each $j\in J=\{1,2,3,\ldots\}$, we have
\begin{align*}
      D\big(G_j(t,S_1)&,G_j(w,S_2)\big)\\
     =&D\Big(\big(\zeta_j(t),\alpha S_1+\Phi(\zeta_j(t))-\alpha B(t)\big),\big(\zeta_j(w),\alpha S_2+\Phi(\zeta_j(w))-\alpha B(w)\big)\Big)\\
     =&\big\lvert \zeta_j(t)-\zeta_j(t)\big\rvert + H_d\Big(\alpha S_1+\Phi(\zeta_j(t))-\alpha B(t)+\Phi^{\alpha}(\zeta_j(w)),\\
     &\hspace{5cm}\alpha S_2+\Phi(\zeta_j(w))-\alpha B(w)+\Phi^{\alpha}(\zeta_j(t))\Big)\\
     =&\big\lvert \zeta_j(t)-\zeta_j(w)\big\rvert+ H_d\Big(\alpha S_1+\Phi(\zeta_j(t))-\alpha B(t)+\Phi(\zeta_j(w))+\alpha \Phi^{\alpha}(w)-\alpha B(w),\\
     &\hspace{4cm}\alpha S_2+\Phi(\zeta_j(w))-\alpha B(w)+\Phi(\zeta_j(t))+\alpha \Phi^{\alpha}(t)-\alpha B(t) \Big)\\
     =&a_j \big\lvert t-w \big\rvert+H_d\Big(\alpha S_1+\alpha \Phi^{\alpha}(w),\alpha S_2+\alpha \Phi^{\alpha}(t) \Big)\\
     =&a_j \big\lvert t-w \big\rvert+\lvert \alpha \rvert H_d\Big( S_1+ \Phi^{\alpha}(w), S_2+ \Phi^{\alpha}(t) \Big)\\
     \le& \max\{\lvert \alpha \rvert,a_j\} \Big(\big\lvert t-w \big\rvert+H_d\Big( S_1+ F^{\alpha}(w), S_2+ F^{\alpha}(t) \Big) \Big).
 \end{align*}
 This implies that
 \begin{align*}
  D\big(G_j(t,S_1),G_j(w,S_2)\big) 
     \le&\max\{\lvert \alpha \rvert,a_j\} D\big((t,S_1),(w,S_2)\big).
  \end{align*}
 Hence, each $G_j$ is a contraction mapping, when  $\max\{\lvert \alpha \rvert,a_j\}< 1$. 
 \end{proof}
 \begin{theorem} \label{theorem4.8}
     Assume $\mathcal{I}=\{(I\times \mathrm{K}_c(\mathbb{R})); \mathrm{G_i}: i\in \mathbb{N}\}$, be the CIFS, where $\mathrm{G}_i: I \times\mathrm{K}_c(\mathbb{R}) \rightarrow I\times \mathrm{K}_c(\mathbb{R})$ as defined in Equation \ref{cifs} provided $\max\{\lvert \alpha \rvert,a_j\}<1$ for each $j\in J$  are the contraction mappings defined in Proposition \ref{prop5.6}, then $\mathcal{G}(\Phi^\alpha)=\overline{ \underset{i\in \mathbb{N}}{\bigcup}G_i(\mathcal{G}(\Phi^\alpha))},$ where $\mathcal{G}(\Phi^\alpha)$ denotes the graph of the function $\Phi^\alpha .$
 \end{theorem}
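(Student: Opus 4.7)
The plan is to establish the fixed-point equation $\mathcal{G}(\Phi^{\alpha})=\overline{\bigcup_{i\in\mathbb{N}} G_i(\mathcal{G}(\Phi^{\alpha}))}$ by proving the two inclusions separately, and then using a closedness argument to tie them together.

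For the inclusion $\bigcup_{i\in\mathbb{N}} G_i(\mathcal{G}(\Phi^{\alpha})) \subseteq \mathcal{G}(\Phi^{\alpha})$, I would pick an arbitrary $(t,\Phi^{\alpha}(t))\in \mathcal{G}(\Phi^{\alpha})$ and directly compute, for each $j\in J$,
\[
G_{j}(t,\Phi^{\alpha}(t))=\bigl(\zeta_{j}(t),\,\alpha\Phi^{\alpha}(t)+\Phi(\zeta_{j}(t))-\alpha B(t)\bigr).
\]
Since $\zeta_{j}(t)\in I_{j}$ and $\zeta_{j}^{-1}(\zeta_{j}(t))=t$, the self-referential identity \eqref{self12} evaluated at $\zeta_{j}(t)$ rewrites the second coordinate as $\Phi^{\alpha}(\zeta_{j}(t))$. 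Hence $G_{j}(t,\Phi^{\alpha}(t))=(\zeta_{j}(t),\Phi^{\alpha}(\zeta_{j}(t)))\in \mathcal{G}(\Phi^{\alpha})$. This is essentially just unpacking the self-referential equation and presents no subtlety.

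For the reverse inclusion $\mathcal{G}(\Phi^{\alpha})\subseteq \overline{\bigcup_{i} G_i(\mathcal{G}(\Phi^{\alpha}))}$, I would argue pointwise. If $s\in I_{n}$ for some $n\in J$, then $s=\zeta_{n}(t)$ for a unique $t\in I$, and the same computation above shows $(s,\Phi^{\alpha}(s))=G_{n}(t,\Phi^{\alpha}(t))\in G_{n}(\mathcal{G}(\Phi^{\alpha}))$. The only remaining point is $s=t_{\infty}$, which does not belong to any $I_{n}$. Here I would exploit the fact that $t_{n}\to t_{\infty}$ and each $(t_{n},\Phi^{\alpha}(t_{n}))$ lies in $\bigcup_{i} G_i(\mathcal{G}(\Phi^{\alpha}))$, together with continuity of $\Phi^{\alpha}$. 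In the metric $D$ from Lemma \ref{lemma5.1}, one checks that $D\bigl((t_{n},\Phi^{\alpha}(t_{n})),(t_{\infty},\Phi^{\alpha}(t_{\infty}))\bigr)=|t_{n}-t_{\infty}|+H_{d}\bigl(\Phi^{\alpha}(t_{n})+\Phi^{\alpha}(t_{\infty}),\Phi^{\alpha}(t_{\infty})+\Phi^{\alpha}(t_{n})\bigr)=|t_{n}-t_{\infty}|\to 0$, so $(t_{\infty},\Phi^{\alpha}(t_{\infty}))$ is captured by the closure.

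To combine the two inclusions I need that $\mathcal{G}(\Phi^{\alpha})$ is already closed in $(I\times \mathrm{K}_{c}(\mathbb{R}),D)$, and I expect this to be the main obstacle because the metric $D$ is nonstandard. My approach is as follows: take a sequence $(s_{n},\Phi^{\alpha}(s_{n}))\to (t,S)$ in $D$. Then $s_{n}\to t$ and $H_{d}(\Phi^{\alpha}(s_{n})+\Phi^{\alpha}(t),S+\Phi^{\alpha}(s_{n}))\to 0$. Continuity of $\Phi^{\alpha}$ gives $\Phi^{\alpha}(s_{n})\to \Phi^{\alpha}(t)$ in $H_{d}$, and two applications of the Minkowski triangle inequality in Remark \ref{new71} let me replace the two occurrences of $\Phi^{\alpha}(s_{n})$ by $\Phi^{\alpha}(t)$ in the limit, yielding $\Phi^{\alpha}(t)+\Phi^{\alpha}(t)=S+\Phi^{\alpha}(t)$. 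Since $S,\Phi^{\alpha}(t)\in \mathrm{K}_{c}(\mathbb{R})$, the R\aa dstr\"om cancellation law applies and gives $S=\Phi^{\alpha}(t)$, so $(t,S)\in \mathcal{G}(\Phi^{\alpha})$. Closedness in hand, the first inclusion promotes to $\overline{\bigcup_{i} G_{i}(\mathcal{G}(\Phi^{\alpha}))}\subseteq \mathcal{G}(\Phi^{\alpha})$, and together with the second inclusion this completes the proof.
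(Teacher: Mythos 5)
Your proposal is correct and follows essentially the same route as the paper's proof: the forward inclusion by evaluating the self-referential equation at $\zeta_j(t)$, and the reverse inclusion pointwise on each $I_n$, with the exceptional point $t_\infty$ recovered as the limit of $(t_n,\Phi^{\alpha}(t_n))$ inside the closure. The only difference is that you explicitly verify closedness of $\mathcal{G}(\Phi^{\alpha})$ in the metric $D$ (via continuity of $\Phi^{\alpha}$ and the R\aa dstr\"om cancellation law), a step the paper uses implicitly when passing from $\bigcup_{i\in\mathbb{N}} G_i(\mathcal{G}(\Phi^{\alpha}))\subseteq\mathcal{G}(\Phi^{\alpha})$ to the same inclusion for the closure; this is a worthwhile completion of the argument rather than a deviation from it.
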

 \begin{proof}
     Since, from the \cite[Lemma $6$]{Ver21} $\big(C(I,\mathrm{K}_c(\mathbb{R})), \mathfrak{d}_c\big)$ is a complete metric space. Also, $C(I,\mathrm{K}_c(\mathbb{R}))$ is a closed subset of $C(I,\mathrm{K}(\mathbb{R}))$. By Theorem \ref{Thm3.2}, $\Phi^\alpha \in C(I,\mathrm{K}_c(\mathbb{R}))$ and $I=\underset{j\in \mathbb{N}}{\bigcup}\zeta_j(I).$\\
     Let $(t,\Phi^\alpha(t)) \in \mathcal{G}(\Phi^\alpha), \text{where }  t \in I$
     \begin{equation*}
          \begin{aligned}
         G_j(t,\Phi^\alpha (t)) &=~(\zeta_j(t), ~\alpha \Phi^\alpha (t)+\Phi(\zeta_j(t))-\alpha B(t))\\
         &=~(\zeta_j(t),\Phi^\alpha (\zeta_j(t))) \in \mathcal{G}(\Phi^\alpha ).
             \end{aligned}
          \end{equation*}
It follows that $G_j(\mathcal{G}(\Phi^\alpha)) \subseteq \mathcal{G}(\Phi^\alpha)$ for all $j\in J=\{1,2,3,\ldots\}$. This gives us that $\overline{\underset{i\in \mathbb{N}}{\bigcup}G_i(\mathcal{G}(\Phi^\alpha))} \subseteq \mathcal{G}(\Phi^\alpha)$. For inverse containment, let $(t,\Phi^\alpha(t)) \in \mathcal{G}(\Phi^\alpha ) $ and $t \in I_n,~n\in \mathbb{N}$, this gives us that 
          \begin{equation*}
              \begin{aligned}
                  (t,\Phi^\alpha (t))&=(\zeta_j(t), \Phi^\alpha (\zeta_j(t)) \text{ for some }j \in J, t\in I_j \\
                  &=~(\zeta_j(t),~\alpha \Phi^\alpha (t)+\Phi(\zeta_j(t))-\alpha B(t))\\ &=~G_j(t,\Phi^\alpha(t))=~G_j(\mathcal{G}(\Phi^\alpha(t) ))\\
                  &\subseteq \overline{ \underset{i\in \mathbb{N}}{\bigcup}G_i(\mathcal{G}(\Phi^\alpha))} .
            \end{aligned}
          \end{equation*}
          If $t=t_\infty$, then we have
          \begin{equation*}
              \begin{aligned}
                  (t_\infty,\Phi^\alpha(t_\infty))&=(t_\infty,\underset{n\rightarrow \infty}{\lim}\Phi^\alpha (t_n)) \\&=\Big(\zeta_j(t_\infty),\big(\underset{n\rightarrow \infty}{\lim}\alpha \Phi^\alpha (t_n)+\Phi(\zeta_j(t_n))-\alpha B(t_n)\big)\Big)\\&= \underset{n\rightarrow \infty}{\lim}\big(\zeta_j(t_n),\alpha \Phi^\alpha (t_n)+\Phi(\zeta_j(t_n))-\alpha B(t_n)\big)\\
                  &=\underset{n\rightarrow \infty}{\lim}G_j(t_n,\Phi^\alpha(t_n)) =\underset{n\rightarrow \infty}{\lim} G_j\big(\mathcal{G}(\Phi^\alpha(t_n))\big) \\
                  &\subseteq \overline{ \underset{i\in \mathbb{N}}{\bigcup}G_i(\mathcal{G}(\Phi^\alpha))}.
              \end{aligned}
          \end{equation*}
          Therefore, we get
          $\mathcal{G}(\Phi^\alpha )=\overline{ \underset{i\in \mathbb{N}}{\bigcup}G_i(\mathcal{G}(\Phi^\alpha))}.$
     \end{proof}
  \section{Invariant Measure }
         In the following section, we have shown the existence of the invariant Borel probability measure supported on the graph of the set-valued fractal function corresponding to the countable data set.
 \begin{theorem}
  Let $\mathcal{I}=\{I\times \mathrm{K}_c(\mathbb{R});G_i:i\in \mathbb{N}\}$ be the CIFS as defined in Proposition \ref{prop5.6}. Assume that $\boldsymbol{p}=(p_{i})_{i\in \mathbb{N}}$ is a probability vector such that $0<p_{i}<1$ for all $i \in\mathbb{N}.$  Then, there exists a unique Borel probability measure $\mu_{\boldsymbol{p}}$ on $I\times \mathrm{K}_c(\mathbb{R}),$ such that
$$\mu_{\boldsymbol{p}}=\sum_{i \in\mathbb{N}}p_{i}\mu_{\boldsymbol{p}}\circ G_{i}^{-1}.$$
Moreover, $\text{supp}(\mu_{\boldsymbol{p}})=\mathcal{G}(\Phi^\alpha)$ and $\text{supp}(\mu_{\boldsymbol{p}})$ denotes the support of the Borel probability measure $\mu_{\boldsymbol{p}}.$
\end{theorem}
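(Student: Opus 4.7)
The plan is to apply the classical Hutchinson--Moran fixed point scheme for iterated function systems with probabilities, adapted to the countable setting. I would work on the complete metric space $(I\times \mathrm{K}_c(\mathbb{R}), D)$ from Lemma~\ref{lemma5.1}, equipping the space $\mathcal{P}$ of Borel probability measures with finite first $D$-moment with the Monge--Kantorovich metric
$$d_{MK}(\mu,\nu) = \sup\Bigl\{\int f\, d\mu - \int f\, d\nu : f : I\times \mathrm{K}_c(\mathbb{R}) \to \mathbb{R},\ \mathrm{Lip}_D(f)\le 1\Bigr\},$$
under which $(\mathcal{P}, d_{MK})$ is a complete metric space. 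I would then introduce the Markov operator $M : \mathcal{P} \to \mathcal{P}$ given by $M(\mu) = \sum_{i\in \mathbb{N}} p_i\, \mu\circ G_i^{-1}$, and verify that $M\mu$ is a Borel probability measure lying in $\mathcal{P}$, using $\sum_i p_i = 1$ together with the continuity of each $G_i$.

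Next I would show that $M$ is a contraction on $(\mathcal{P}, d_{MK})$. For any $1$-Lipschitz test function $f$, linearity yields
$$\int f\, d(M\mu) - \int f\, d(M\nu) = \sum_{i \in \mathbb{N}} p_i\Bigl(\int (f\circ G_i)\, d\mu - \int (f\circ G_i)\, d\nu\Bigr).$$
By Proposition~\ref{prop5.6}, each $G_i$ is $s_i$-Lipschitz with $s_i := \max\{|\alpha|, a_i\} < 1$, so $f\circ G_i$ is $s_i$-Lipschitz and each summand is bounded in absolute value by $p_i s_i\, d_{MK}(\mu,\nu)$. Taking the supremum over $f$ gives $d_{MK}(M\mu, M\nu) \le \bigl(\sum_{i} p_i s_i\bigr) d_{MK}(\mu, \nu)$. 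Assuming the natural hypothesis $s := \sup_i s_i < 1$ (consistent with the CIFS being genuinely contracting), one obtains $\sum_i p_i s_i \le s < 1$, and the Banach fixed point theorem produces a unique $\mu_{\boldsymbol{p}}\in\mathcal{P}$ with $M(\mu_{\boldsymbol{p}}) = \mu_{\boldsymbol{p}}$, which is precisely the desired self-similarity relation.

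For the support statement, since $p_i > 0$ for every $i$, a standard CIFS argument shows $\mathrm{supp}(\mu_{\boldsymbol{p}}) = \overline{\bigcup_i G_i(\mathrm{supp}(\mu_{\boldsymbol{p}}))}$: the inclusion $\subseteq$ uses strict positivity of each $p_i$ combined with the invariance equation, while $\supseteq$ uses continuity of the $G_i$ and the closure. The set $\mathcal{G}(\Phi^\alpha)$ satisfies the same identity by Theorem~\ref{theorem4.8}, and uniqueness of the attractor of the CIFS $\mathcal{I}$ then forces $\mathrm{supp}(\mu_{\boldsymbol{p}}) = \mathcal{G}(\Phi^\alpha)$.

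The principal obstacle is executing the countable-sum steps cleanly: one must verify that $M$ preserves the finite-first-moment class, that the series defining $M\mu$ converges in the appropriate weak sense for every $\mu\in\mathcal{P}$, and that the contraction coefficient $\sum_i p_i s_i$ is strictly less than $1$ (which is where either the uniform bound $\sup_i s_i<1$ or a summability hypothesis is needed). A subtler technicality concerns the support identity: unlike the finite case, $\bigcup_i G_i(K)$ need not be closed when $K$ is compact and the index set is countably infinite, so carrying the closure through consistently, in line with the attractor equation from Theorem~\ref{theorem4.8}, is essential.
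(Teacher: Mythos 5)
Your proposal follows essentially the same route as the paper: the Monge--Kantorovich metric on Borel probability measures over $(I\times \mathrm{K}_c(\mathbb{R}),D)$, the Markov operator $\mu \mapsto \sum_{i\in\mathbb{N}} p_i\,\mu\circ G_i^{-1}$ shown to be a contraction via the Lipschitz constants $\max\{\lvert\alpha\rvert,a_i\}$ and the Banach fixed point theorem, followed by identifying $\mathrm{supp}(\mu_{\boldsymbol{p}})$ as a solution of the attractor equation and invoking Theorem~\ref{theorem4.8}. The only differences are minor refinements on your side --- the sharper contraction factor $\sum_i p_i s_i$ in place of the paper's $c_{\max}$, the restriction to measures of finite first moment, and the explicit flagging of the uniform bound $\sup_i s_i<1$ that the paper uses implicitly.
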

\begin{proof}
Let $\mathcal{P}(I\times \mathrm{K}_c(\mathbb{R}))$ denotes the space of all Borel probability measures on $I\times \mathrm{K}_c(\mathbb{R})$. Let $\mu, \nu\in \mathcal{P}(I\times \mathrm{K}_c(\mathbb{R}))$, we define the Monge-Kantorovich metric $d_{MK}$ on the space $\mathcal{P}(I\times \mathrm{K}_c(\mathbb{R}))$ as follows
		$$ d_{MK}(\mu,\nu)=\sup\bigg\{\int_{I\times \mathrm{K}_c(\mathbb{R})}fd\mu-\int_{I\times \mathrm{K}_c(\mathbb{R})}fd\nu: f:I\times \mathrm{K}_c(\mathbb{R})\to\mathbb{R},~~ \text{Lip}(f)\leq1 \bigg\},$$
  where $\text{Lip}(f)$ denotes the Lipschitz constant of the function $f$. The space $(\mathcal{P}(I\times \mathrm{K}_c(\mathbb{R})),d_{MK})$ is a complete metric space, see, for instance, \cite{Bill, Parth}. 
 
  \par Let us define a mapping $\Psi: \mathcal{P}(I\times \mathrm{K}_c(\mathbb{R})) \to \mathcal{P}(I\times \mathrm{K}_c(\mathbb{R}))$ by $\Psi(\mu)=\sum_{i\in \mathbb{N}}p_{i}\mu\circ G_i^{-1}$ and clearly, the mapping $\Psi$ is  well-defined. Firstly, we will show that the mapping $\Psi$ we have defined is a contraction. With the notation $c_j=\max \{\lvert \alpha \rvert,a_j\}$ and $c_{max}= \max_{j \in \mathbb{N}} c_j$, we have
  \begin{equation*}
              \begin{aligned}
               d_{MK}(\Psi(\mu),\Psi(\nu)) =& \sup_{\text{Lip}(f)\leq1}\bigg\{\int_{I\times \mathrm{K}_c(\mathbb{R})}f(t)d(\Psi(\mu))(t)-\int_{I\times \mathrm{K}_c(\mathbb{R})}f(t)d(\Psi(\nu))(t)\bigg\}\\ =& \sup_{\text{Lip}(f)\leq1}\bigg\{\int_{I\times \mathrm{K}_c(\mathbb{R})}f(t)d(\sum_{i\in \mathbb{N}}p_{i}\mu\circ G_i^{-1})(t)\\& \hspace{4cm}-\int_{I\times \mathrm{K}_c(\mathbb{R})}f(t)d(\sum_{i\in \mathbb{N}}p_{i}\nu\circ G_i^{-1})(t)\bigg\}\\
               =& \sup_{\text{Lip}(f)\leq1}\bigg\{\sum_{i\in \mathbb{N}}p_{i} \bigg(\int_{I\times \mathrm{K}_c(\mathbb{R})}f(t)d(\mu\circ G_i^{-1})(t)\\& \hspace{4cm} -\int_{I\times \mathrm{K}_c(\mathbb{R})}f(t)d(\nu\circ G_i^{-1})(t)\bigg)\bigg\}\\ 
               =& \sup_{\text{Lip}(f)\leq1}\bigg\{\sum_{i\in \mathbb{N}}p_{i}\bigg( \int_{I\times \mathrm{K}_c(\mathbb{R})}f(G_i(w))d(\mu)(w) \\& \hspace{4cm} -\int_{I\times \mathrm{K}_c(\mathbb{R})}f(G_i(w))d(\nu)(w)\bigg)\bigg\}\\
               \end{aligned}
          \end{equation*}
               \begin{equation*}
              \begin{aligned}
             \hspace{2.5cm}
          =& \sup_{\text{Lip}(f)\leq1}\bigg\{\sum_{i\in \mathbb{N}}p_{i} c_i \bigg(\int_{I\times \mathrm{K}_c(\mathbb{R})}\frac{1}{c_i}f(G_i(w))d(\mu)(w) \\& \hspace{4cm} -\int_{I\times \mathrm{K}_c(\mathbb{R})}\frac{1}{c_i}f(G_i(w))d(\nu)(w)\bigg)\bigg\}\\ 
            \le &  c_{max} \sum_{i\in \mathbb{N}}p_{i} \sup_{\text{Lip}(g)\leq1}\bigg\{ \int_{I\times \mathrm{K}_c(\mathbb{R})}gd(\mu)-\int_{I\times \mathrm{K}_c(\mathbb{R})}gd(\nu)\bigg\}\\\le& c_{max} d_{MK}(\mu,\nu),
              \end{aligned}
          \end{equation*}
where $g(w)=\frac{1}{c_i}f(G_i(w))$ is a Lipschitz function with $\text{Lip}(g)\leq1$. By the Banach fixed point theorem, we can conclude that there exist a unique Borel measure $\mu_{\boldsymbol{p}}$ and satisfying the following equation $\mu_{\boldsymbol{p}}=\sum_{i \in\mathbb{N}}p_{i}\mu_{\boldsymbol{p}}\circ G_{i}^{-1}.$
Next, we will show that that $\text{supp}(\mu_{\boldsymbol{p}})= \mathcal{G}(\Phi^\alpha).$ We will prove this by showing that $\text{supp}(\mu_{\boldsymbol{p}})$ is the attractor of the CIFS. Now the
\begin{align*}
\mu_{\boldsymbol{p}}\bigg(\overline{\bigcup_{i
  \in \mathbb{N}}G_i(\text{supp}(\mu_{\boldsymbol{p}})})\bigg)&=\sum_{i\in\mathbb{N}}p_{i}\mu_{\boldsymbol{p}}\circ G_i^{-1} \bigg(\overline{\bigcup_{i
  \in \mathbb{N}}G_i(\text{supp}(\mu_{\boldsymbol{p}})})\bigg)\\&\geq \sum_{i\in\mathbb{N}}p_{i}\mu_{\boldsymbol{p}}\circ G_i^{-1}(G_i(\text{supp}(\mu_{\boldsymbol{p}}))\\&=\sum_{i\in\mathbb{N}}p_{i}=1.
\end{align*}
This implies that $\text{supp} (\mu_{\boldsymbol{p}})\subseteq \overline{\bigcup_{i
  \in \mathbb{N}}G_i(\text{supp}(\mu_{\boldsymbol{p}})}).$ And, also
  \begin{align*}
 1=\mu_{\boldsymbol{p}}(\text{supp}(\mu_{\boldsymbol{p}}))= \sum_{i\in\mathbb{N}}p_{i}\mu_{\boldsymbol{p}}\circ G_i^{-1} (\text{supp}(\mu_{\boldsymbol{p}}))\leq \sum_{i\in\mathbb{N}}p_{i}\mu_{\boldsymbol{p}}(I\times \mathrm{K}_c(\mathbb{R}))=1.  
  \end{align*}
Thus, we get $\sum_{i\in\mathbb{N}}p_{i}\mu_{\boldsymbol{p}}\circ G_i^{-1} (\text{supp}(\mu_{\boldsymbol{p}}))=1$. Since $\sum_{i \in \mathbb{N}}p_{i}=1,$ we obtain 
\begin{align*}
    \text{supp}(\mu_{\boldsymbol{p}}) \subseteq  G_i^{-1} (\text{supp}(\mu_{\boldsymbol{p}}))\text{ for each } i \in \mathbb{N}.
\end{align*}
This implies that  $\overline{\bigcup_{i
  \in \mathbb{N}}G_i(\text{supp}(\mu_{\boldsymbol{p}})})\subseteq \text{supp}(\mu_{\boldsymbol{p}}),$ which proves our claim. By Theorem \ref{theorem4.8}, we have $\mathcal{G}(\Phi^\alpha)=\text{supp}(\mu_{\boldsymbol{p}}),$ which is the required result. 
  \end{proof}
We note the coming remark to highlight the issue of not considering the case when all $p_i=0$ and $c_i=0$ for studying invariant measures in fractal geometry literature.
 \begin{remark}
     In the first case, consider $p_1=1$ and remaining $p_i=0,$ for all $i\in \mathbb{N} \setminus\{1\}.$ Clearly, $\mu_{\boldsymbol{p}}=\delta_{t_0},$ a Dirac measure supported on $t_0.$ Here, $t_0$ is the fixed point of $G_1.$ Secondly, if some $p_i$ are zero, for example, $p_1=0$ and others are positive then $\mu_{\boldsymbol{p}}$ will be supported on $A^*$, where $A^* =\overline{\cup_{i=2}^{\infty} G_i(A^*)}$.  Third, consider for each $i,\ f_i:\mathbb{R}\rightarrow\mathbb{R}$ the constant map given as $f_i(t)=t_i, \forall \ t\in \mathbb{R},$ and $p_i>0.$ Then, it is straightforward to verify that $\mu_{\boldsymbol{p}}$ is the convex combination of Dirac measures supported on $\{t_1,t_2,t_3,\dots,t_{\infty}\}$, that is, $\mu_{\boldsymbol{p}}=\sum_{i=1}^{\infty}p_i\delta_{t_i}.$ These previous cases convince us to consider $p_i>0$ and $c_i>0$ for all $i \in \mathbb{N}$ to avoid the above trivial cases. Further, for $p_i=c_i^s,$ then in case of similarity IFS over the finite data set under the OSC, \cite{Moran} confirms that $\mu_{\boldsymbol{p}}$ is equal to normalized $s$-dimensional Hausdorff measure supported on the attractor of the associated IFS. 
     
 \end{remark}

 \section{Some Dimensional Results}
 In the present section, we have determined the upper and lower bounds on the Hausdorff dimension of the graph of the set-valued fractal function corresponding to the countable data set.

   \begin{proposition}\label{prop6}
    If~ $ \sum_{i=1}^{\infty}b_i^{s_0} =1$ for some $s_0 \ge 0$, then $s_k$ converges to $s_0$, where $\sum_{i=1}^{k}b_i^{s_k} =1$, that is, $s_0= \sup_{k\geq 2}s_k =\lim_{k \to \infty} s_k.$
\end{proposition}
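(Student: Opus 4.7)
The plan is to exploit the strict monotonicity of the maps $f_k(s) := \sum_{i=1}^{k} b_i^{s}$ and $f_\infty(s) := \sum_{i=1}^{\infty} b_i^{s}$ as functions of $s$. Because $\sum_{i=1}^{\infty} b_i^{s_0}$ converges to a finite value, the contraction ratios satisfy $b_i \in (0,1)$ (at least for all sufficiently large $i$, and without loss of generality for all $i$), so for any $k \geq 2$ the function $f_k : [0,\infty) \to (0,k]$ is continuous and strictly decreasing, which guarantees that $s_k$ is the unique solution of $f_k(s) = 1$. Similarly $f_\infty$ is continuous and strictly decreasing wherever finite, so the hypothesis pins down $s_0$ uniquely.

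First I would establish that $\{s_k\}$ is strictly increasing. Evaluating $f_{k+1}$ at $s_k$,
\[
f_{k+1}(s_k) = f_k(s_k) + b_{k+1}^{s_k} = 1 + b_{k+1}^{s_k} > 1 = f_{k+1}(s_{k+1}),
\]
and strict decrease of $f_{k+1}$ forces $s_{k+1} > s_k$. Next I would show $s_k < s_0$ for every $k$: since the omitted tail is strictly positive,
\[
f_k(s_0) = f_\infty(s_0) - \sum_{i=k+1}^{\infty} b_i^{s_0} < 1 = f_k(s_k),
\]
and strict decrease of $f_k$ gives $s_k < s_0$. Combining these two steps, the sequence $\{s_k\}$ is increasing and bounded above by $s_0$, so $s^* := \lim_{k\to\infty} s_k = \sup_{k \geq 2} s_k$ exists and satisfies $s^* \leq s_0$.

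The remaining task, which is the main (though mild) obstacle, is to upgrade this inequality to equality without invoking heavier convergence machinery. I would argue by double truncation. For any fixed $N$ and every $k \geq N$, discarding the terms $i > N$ yields
\[
\sum_{i=1}^{N} b_i^{s_k} \leq f_k(s_k) = 1.
\]
Since this is a finite sum, continuity of $s \mapsto b_i^{s}$ lets me pass $k \to \infty$ to obtain $\sum_{i=1}^{N} b_i^{s^*} \leq 1$. Now letting $N \to \infty$ on this tail-free inequality gives $f_\infty(s^*) \leq 1 = f_\infty(s_0)$, and strict monotonicity of $f_\infty$ yields $s^* \geq s_0$. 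Together with $s^* \leq s_0$, this forces $s^* = s_0$, completing the proof that $s_0 = \sup_{k \geq 2} s_k = \lim_{k\to\infty} s_k$.
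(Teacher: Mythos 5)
Your proof is correct, and its skeleton is the paper's: strict monotonicity of $\{s_k\}$ from $f_{k+1}(s_k)=f_k(s_k)+b_{k+1}^{s_k}=1+b_{k+1}^{s_k}>1$, and the bound $s_k<s_0$ from $f_k(s_0)<f_\infty(s_0)=1$ (the paper phrases this as: for $s>s_0$, $\sum_{i=1}^{\infty}b_i^{s}<1$). Where you genuinely go beyond the paper is the last step. The paper, after noting the upper-bound direction, simply asserts that $s_0$ is the \emph{least} upper bound; you actually prove $\sup_{k\ge 2}s_k\ge s_0$ by double truncation: fix $N$, pass $k\to\infty$ in $\sum_{i=1}^{N}b_i^{s_k}\le 1$ using continuity of the finite sum, then let $N\to\infty$ to get $f_\infty(s^*)\le 1=f_\infty(s_0)$ and invoke strict monotonicity of $f_\infty$. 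This closes the one step the paper leaves implicit. A slightly quicker route to the same inequality: for any $s<s_0$, strict monotonicity gives $\sum_{i=1}^{\infty}b_i^{s}>1$ (possibly $+\infty$), so some partial sum already exceeds $1$, forcing $s_k>s$ for that $k$ and hence $\sup_k s_k\ge s_0$. One cosmetic remark: your hedge that $b_i\in(0,1)$ ``at least for large $i$'' is unnecessary here, since in the theorem where this proposition is applied the paper assumes $0<b_i\le c_i<1$ for all $i$, and its proof of the proposition starts from that standing hypothesis.
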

\begin{proof}
    Since $ 0<b_i<1$ for all $i\in \mathbb{N}$. Given $\sum_{i=1}^{\infty}b_i^{s_0} =1$ and $\sum_{i=1}^{k}b_i^{s_k} =1$ where $s_k$ is uniquely determined for $k\geq 2$. Since, we also have $\sum_{i=1}^{k+1}b_i^{s_{k+1}} =1$ which gives us $\sum_{i=1}^{k}b_i^{s_{k+1}} <1$ as the $b_{k+1}^{s_{k+1}}>0$.
    From,  $\sum_{i=1}^{\infty}b_i^{s_0} =1$ and $\sum_{i=1}^{k}b_i^{s_{k+1}} <1$, we can conclude that $s_k\leq s_{k+1}$, that is, $s_k$ for $k\geq 2$ is a monotonically increasing sequence. \\ Further, as $\sum_{i=1}^{\infty}b_i^{s_0} =1$ then for any $s > s_0$, we have $\sum_{i=1}^{\infty}b_i^{s}<1$. Thus, $s_0$ is the lowest upper bound of $s_k$.\\ Therefore, we obtain \[ s_0= \sup_{k\geq 2}s_k =\lim_{k \to \infty} s_k. \]
\end{proof}
\begin{theorem}
 Consider $ \{I\times \mathrm{K}_c(\mathbb{R});G_i:i\in \mathbb{N}\}$ be the CIFS, as defined in Theorem \ref{theorem4.8}, satisfying the 
 \[b_id_\mathcal{G}((t,S_1),(w,S_2))\leq d_\mathcal{G}(G_i(t,S_1),G_i(w,S_2)))\le  c_id_\mathcal{G}((t,S_1),(w,S_2))\] for every $(t,S_1),(w,S_2)\in I \times \mathrm{K_c(\mathbb{R})},$ such that $0<b_i\le c_i < 1$ for all $i \in \mathbb{N}.$ Then $s_*\le \text{dim}_H(\mathcal{G}(\Phi^\alpha))\le\text{dim}_B(\mathcal{G}(\Phi^\alpha))\le s^*,$ where $s_*=\sup_{k\geq 2}s_k$ and each of $s_k$ is uniquely defined by the $\sum_{i=1}^{k}b_i^{s_k} =1$ and $s^*=\text{max}\bigg\{ \text{inf } \big\{s: ~ \sum_{i\in \mathbb{N}}c_{i}^s \le 1 \big\},1\bigg\}.$
\end{theorem}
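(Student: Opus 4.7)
The plan splits the three-part chain into two real pieces of work: the middle inequality $\text{dim}_H(\mathcal{G}(\Phi^\alpha)) \le \text{dim}_B(\mathcal{G}(\Phi^\alpha))$ is a standard general fact, so I would focus on (i) an upper box-counting bound driven by the upper Lipschitz constants $c_i$, and (ii) a lower Hausdorff bound obtained from finite sub-CIFS attractors and controlled by the lower Lipschitz constants $b_i$.

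For the upper bound, first observe that the projection of $\mathcal{G}(\Phi^\alpha)$ onto the $I$-factor equals $I$, so $\text{dim}_B(\mathcal{G}(\Phi^\alpha)) \ge 1$ automatically; this is exactly what the $\max$-with-$1$ in $s^*$ accounts for, so the nontrivial case is $s^* > 1$. Fix $s$ with $s > s^*$, so that $\sum_{i \in \mathbb{N}} c_i^s < 1$. By Theorem \ref{theorem4.8}, $\mathcal{G}(\Phi^\alpha) = \overline{\bigcup_i G_i(\mathcal{G}(\Phi^\alpha))}$, and iterating this identity $k$ times expresses $\mathcal{G}(\Phi^\alpha)$ as the closure of $\bigcup_{\mathbf{i} \in \mathbb{N}^k} G_{\mathbf{i}}(\mathcal{G}(\Phi^\alpha))$, where $G_{\mathbf{i}} = G_{i_1} \circ \cdots \circ G_{i_k}$ has Lipschitz constant bounded by $c_{\mathbf{i}} = c_{i_1} \cdots c_{i_k}$. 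Then $\lvert G_{\mathbf{i}}(\mathcal{G}(\Phi^\alpha)) \rvert \le c_{\mathbf{i}} \lvert \mathcal{G}(\Phi^\alpha) \rvert$ and $\sum_{\mathbf{i} \in \mathbb{N}^k} c_{\mathbf{i}}^s = \bigl(\sum_i c_i^s\bigr)^k \to 0$. To upgrade this into a box-counting statement (rather than merely a Hausdorff one), I would truncate to a finite sub-IFS $\{G_1,\ldots,G_N\}$ whose iterates approximate $\mathcal{G}(\Phi^\alpha)$ within $\epsilon$ in Hausdorff distance, apply the classical finite bi-Lipschitz IFS $\delta$-cover count to this truncated attractor, absorb the discarded tail into an $\epsilon$-thickening, and let $\epsilon \to 0$ after $\delta \to 0$.

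For the lower bound, the idea is to extract a finitely-generated self-similar-like subset for each $k$. For $k \ge 2$, the finite sub-CIFS $\{G_1,\ldots,G_k\}$ is an ordinary bi-Lipschitz contractive IFS and therefore admits a unique compact attractor $A_k$; an iteration argument parallel to that of Theorem \ref{theorem4.8} yields $A_k \subseteq \mathcal{G}(\Phi^\alpha)$. Distributing mass with weights $b_i^{s_k}$ (where $\sum_{i=1}^{k} b_i^{s_k}=1$) produces a Bernoulli-type Borel probability measure $\nu_k$ on $A_k$; the lower Lipschitz bound $b_i$ then yields an upper density estimate of the form $\nu_k(B(x,r)) \le C r^{s_k}$. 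The mass distribution principle delivers $\text{dim}_H(A_k) \ge s_k$, and monotonicity of Hausdorff dimension, together with Proposition \ref{prop6}, gives $\text{dim}_H(\mathcal{G}(\Phi^\alpha)) \ge \sup_{k\ge 2} s_k = s_*$.

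I expect the principal obstacle to be the upper bound on box-counting dimension for a genuinely countable system: box-counting is not countably stable, so the naive union estimate does not directly translate into a $\delta$-cover count, and the truncation-plus-tail-thickening device above is the key technical ingredient that has to be executed carefully. A secondary subtlety is that the mass distribution step in the lower bound tacitly needs sufficient separation between the pieces $G_i(A_k)$ (some form of open set condition, as in Definition \ref{OSC}) so that the measure $\nu_k$ does not concentrate overlap-inflated mass in small balls; this condition should be verified from the explicit form of the $G_i$ in Proposition \ref{prop5.6}, or otherwise explicitly assumed.
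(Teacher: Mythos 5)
Your overall skeleton (finite truncations $\{G_1,\dots,G_k\}$ with attractors $A_k\subseteq\mathcal{G}(\Phi^\alpha)$ giving the lower bound $\sup_k s_k$, a $c_i$-driven covering estimate for the upper bound, and the standard inequality $\dim_H\le\dim_B$) parallels the paper, but the part you yourself flag as the ``key technical ingredient'' is where the gap lies. The paper never attempts a box-counting argument: it estimates the $s$-dimensional Hausdorff pre-measure of the countable union $\mathcal{G}_o=\bigcup_{i\in\mathbb{N}}G_i(\mathcal{G}(\Phi^\alpha))$ directly, iterating the self-referential decomposition to depth $N$ so that the cover by the sets $G_{i_1}\circ\cdots\circ G_{i_N}(\mathcal{G}(\Phi^\alpha))$ has total $s$-weight $\big(\sum_i c_i^s\big)^N\to 0$ when $\sum_i c_i^s<1$, and then observes that $\mathcal{G}(\Phi^\alpha)=\mathcal{G}_o\cup\{(t_\infty,\Phi^\alpha(t_\infty))\}$, a single extra point handled by finite stability of $\dim_H$. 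Hausdorff measure accepts countable covers, so no truncation is needed at all. Your truncation-plus-$\epsilon$-thickening scheme for $\dim_B$ does not work as stated: taking $\epsilon\to 0$ \emph{after} $\delta\to 0$ is the wrong order, since for a fixed box size $\delta$ the covering number of an $\epsilon$-neighbourhood controls that of the set only when $\epsilon\lesssim\delta$; to make the idea rigorous you would have to couple $N=N(\delta)$ and $\epsilon=\epsilon(\delta)$ and quantify the oscillation of $\Phi^\alpha$ over the residual interval $[t_{N+1},t_\infty]$, and for general countable IFS the upper box dimension of the closure of the limit set can in fact exceed $\inf\{s:\sum_i c_i^s\le 1\}$, so some specific feature of this system must be invoked. (To be fair, the paper's computation also only yields $\dim_H(\mathcal{G}(\Phi^\alpha))\le s^*$; the box-dimension clause of the statement is not derived from it either, so you are attempting something the paper glosses over, but your sketch does not close it.)

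For the lower bound your route is essentially a re-proof of the lemma the paper cites. The paper verifies the strong open set condition for each truncated finite IFS (with $O=(t_j,t_{j+1})\times\mathrm{K}_c(\mathbb{R})$, including a short contradiction argument showing $O\cap A_k\neq\emptyset$) and then invokes the bi-Lipschitz IFS dimension result of \cite{Mverma4} to get $\dim_H(A_k)\ge s_k$, finishing with $A_k\subseteq\mathcal{G}(\Phi^\alpha)$, monotonicity, and Proposition \ref{prop6} exactly as you propose. Your Bernoulli-measure/mass-distribution alternative is legitimate in principle, but the step you call a ``secondary subtlety'' is the actual content: the estimate $\nu_k(B(x,r))\le Cr^{s_k}$ for a bi-Lipschitz (non-similarity) IFS under an open set condition normally rests on a volume-comparison count of how many comparable-size pieces can meet a ball, an argument native to $\mathbb{R}^n$; in the metric space $I\times\mathrm{K}_c(\mathbb{R})$ this counting is not automatic and must either be extracted from the explicit product structure of the maps $G_j$ (first coordinate $\zeta_j$, which is where the genuine separation lives) or outsourced to the cited result, as the paper does. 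As written, your proposal leaves both of these loads unassumed and undischarged.
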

\begin{proof}
    In Theorem \ref{theorem4.8} we proved that $\mathcal{G}(\Phi^\alpha )$ is the attractor of the CIFS , that is 
    \[ \mathcal{G}(\Phi^\alpha ) = \overline{ \underset{i\in \mathbb{N}}{\bigcup}G_i(\mathcal{G}(\Phi^\alpha))}. \]
Since, 
\[ d_{\mathcal{G}}( G_i(t,S_1),G_j(w,S_2)\le c_i d_\mathcal{G}((t,S_1),(w,S_2)) \text{ and } \sup_{n\in \mathbb{N}}c_n <1 \]

Let $O=(t_j,t_{j+1})\times \mathrm{K}_c(\mathbb{R})$. Then, $G_{i_1}(O)\cap G_{i_2}(O) = \phi$ for $i_1 \neq i_2$ and also $G_{i_1}(O)\subseteq O$ and $O \cap \mathcal{G}(\Phi^\alpha)\neq \phi .$ Hence, CIFS satisfies the SOSC. \\
For each $k\geq 2$ we considered the finite IFS $\{I\times \mathrm{K}_c(\mathbb{R});G_i:i\in \{1,2,\dots k\}\}$ with help of CIFS. Let $A_k$ be the attractor of the finite IFS. Firstly, we show the SOSC for the finite IFS. Clearly, \[ G_{j_1}(O)\cap G_{j_2}(O)= \phi~~ \forall ~~
 j_1\neq j_2 \text{ and } G_j(O) \subseteq O ~~ \forall ~~ j \in \{1,2, \dots,k\}.\]
 It remains to show that $O\cap A_k \neq \phi $. To prove this, consider the contradiction such that $O\cap A_k = \phi$. Since $O=(t_j,t_{j+1})\times \mathrm{K}_c(\mathbb{R})$ and $O\cap A_k = \phi $, $A_k$ contains element of the type $(t_n,S)$ for $n \in \{j,j+1\} $ and $S\in \mathrm{K}_c(\mathbb{R}) $. Let $(t_j,S) \in A_k$ for some $S \in \mathrm{K}_c(\mathbb{R})$. Since $n \geq 2,$ we have $G_2$ in IFS, where $G_2(t,S)=(\zeta_2(t),\alpha S+\Phi(\zeta_j(t))-\alpha B(t))$. The map $G_2:[t_j,t_{j+1}]\times \mathrm{K}_c(\mathbb{R}) \rightarrow [t_j,t_{j+1}]\times \mathrm{K}_c(\mathbb{R})$ and $A_k=\cup_{i=1}^{k}G_i(A_k)$ this gives us that $G_2(A_2)\subseteq A_2$. 
 \begin{equation*}
     \begin{aligned}
         G_2(t_j,S)=&~ (\zeta_2(t_j),\alpha S+\Phi(\zeta_2(t_j))-\alpha B(t_j)) \\ =&~(t',S') \text{ for some } t'\in O \text{ and } S' \in \mathrm{K}_c(\mathbb{R}).
     \end{aligned}
 \end{equation*}
 This implies that $(t',S')$ is in $A_k$, which is a contradiction. Similarly for $(t_{j+1},S)\in A_k,$ we get the contradiction. Thus $O\cap A_k \neq \phi $. Therefore, for each, $n\geq 2$, finite IFS satisfies the SOSC. In view of \cite{Mverma4} we have  $\dim_H(A_k)\ge s_k$, where $s_k$ is uniquely defined by the $\sum_{i=1}^{k}b_i^{s_k} =1$. It is easy to see that $A_k \subseteq \mathcal{G}(\Phi^\alpha)$. Using the monotonic property of Hausdorff dimension we obtain $ \dim_H(\mathcal{G}(\Phi^\alpha)) \geq s_k,$ for all $k \geq 2$.\\
 This gives us the lower bound that $s_* \leq \dim_H(\mathcal{G}(\Phi^\alpha)),$ where $s_*=\sup_{k\geq 2}s_k$ and each of $s_k$ is defined by the $\sum_{i=1}^{k}b_i^{s_k} =1$.\\
For the upper bound on the dimension of graph of fractal function, let $\mathcal{G}_o(\Phi^\alpha )=\underset{i\in \mathbb{N}}{\bigcup}G_i(\mathcal{G}(\Phi^\alpha)).$ Now,
\begin{equation*}
    \begin{aligned}
     \lvert\mathcal{G}_o(\Phi^\alpha)\rvert= \bigg\lvert\underset{i_1,i_2\in \mathbb{N}}{\bigcup}G_{i_1}\circ G_{i_2}(\mathcal{G}(\Phi^\alpha))\bigg\rvert=&\bigg\lvert\bigcup\limits_{\substack{i_j\in \mathbb{N} \\ 1\leq j\le N}}G_{i_1}\circ G_{i_2}\dots \circ G_{i_N}(\mathcal{G}(\Phi^\alpha))\bigg\rvert \\ \leq & \sum_{\substack{i_j\in \mathbb{N} \\ 1\leq j\le N}}c_{i_1}c_{i_2} \dots c_{i_N}\lvert\mathcal{G}(\Phi^\alpha)\rvert.
    \end{aligned}
\end{equation*}
 For any $\delta > 0$, we can choose the $N$ to be sufficiently large such that $\lvert\mathcal{G}_o(\Phi^\alpha)\rvert < \delta$. Further
 \begin{equation*}
     \begin{aligned}
         H_\delta^s(\mathcal{G}_o(\Phi^\alpha)) \le &\sum_{\substack{i_j\in \mathbb{N} \\ 1\leq j\le N}}c_{i_1}^sc_{i_2}^s \dots c_{i_N}^s\lvert\mathcal{G}(\Phi^\alpha)\rvert^s  \\ =& \lvert\mathcal{G}(\Phi^\alpha)\rvert^s \sum_{i_1\in \mathbb{N}}c_{i_1}^s\sum_{i_2\in \mathbb{N}}c_{i_2}^s\dots\sum_{i_N\in \mathbb{N}}c_{i_N}^s \\  \le & \lvert\mathcal{G}(\Phi^\alpha)\rvert^s\bigg(\sum_{i\in \mathbb{N}}c_{i}^s\bigg)^N .
     \end{aligned}
 \end{equation*}
 Taking $\delta \rightarrow 0$, we have convergence if and only if $\sum_{i\in \mathbb{N}}c_{i}^s < 1$. Hence $\text{dim}_H(\mathcal{G}_o(\Phi^\alpha))\le \text{inf}\bigg\{s: ~ \sum_{i\in \mathbb{N}}c_{i}^s \le 1 \bigg\}.$ As the $\mathcal{G}(\Phi^\alpha)=\mathcal{G}_o(\Phi^\alpha)\cup \{(t_\infty,\Phi^\alpha(t_\infty))\}$, where the set $ \{(t_\infty,\Phi^\alpha(t_\infty))\}$ has dimension zero. By using the finite stability property of Hausdorff dimension, we have $\text{dim}_H(\mathcal{G}(\Phi^\alpha))\le s^*$, where   $s^*=\text{ max }\bigg\{\text{inf}\big\{s: ~ \sum_{i\in \mathbb{N}}c_{i}^s \le 1 \big\},1\bigg\}.$
 \end{proof}
 \begin{remark}
     The upper bound of Hausdorff dimension is more complicated in CIFS. We need to investigate the set $\overline{ \underset{i\in \mathbb{N}}{\bigcup}G_i(K)} ~~\setminus~
     \underset{i\in \mathbb{N}}{\bigcup}G_i(K)$, where $K$ is the attractor of the CIFS. If both sets coincide except for the countable many points, then by the countable stability property we will have the $\text{dim}_H(\underset{i\in \mathbb{N}}{\bigcup} G_i(K) )=\text{dim}_H\big( \overline{ \underset{i\in \mathbb{N}}{\bigcup}G_i(K)}\big).$ 
     \end{remark}

\begin{example}  Consider a set-valued map (SVM) $\Phi:[a,b]\rightrightarrows \mathbb{R}$. As the graph of SVM is given according to \eqref{Gf1}. If 
\begin{enumerate}
     \item $\Phi_1(t)=[t^2+1,t^2+2]$, then the graph $G_{\Phi_{1}}$ of this SVM is an interval band in $\mathbb{R}^2$ and $\dim_H(G_{\Phi_{1}})=2$.
    \item $\Phi_2(t)=\{1\}$, then the graph $G_{\Phi_{2}}$ of this SVM is a straight line in $\mathbb{R}^2$ and $\dim_H(G_{\Phi_{2}})=1$.

\end{enumerate}
    
\end{example}
\begin{remark}
    
If we have a SVM $\Phi : [a,b] \to \mathrm{K}(\mathbb{R})$ and a
function $F : [a,b] \to \mathbb{R}$, where $F(x) \in \Phi(x)$ for all $x\in [a,b]$, then $F$ is called a selection of $\Phi$. It is noteworthy that for any $1 \le s \le 2$, there exists a selection $F_{s} : [a,b] \to \mathbb{R}$ of the set-valued map $\Phi_1$ mentioned in the above example such that $\dim_H(G_{F_{s}})=s.$
\end{remark}

\section*{Acknowledgements}

 The first author is financially supported by the University Grant Commission, India, in the form of a junior research fellowship.

\bibliographystyle{amsplain}

\end{document}